\documentclass[12pt,leqno]{article}
\usepackage{amsthm,amsfonts,amssymb,amsmath,eufrak,oldgerm}
\usepackage{epsfig}
\numberwithin{equation}{section} 

\renewcommand\d{\partial}

\renewcommand\b{\beta}

\renewcommand\o{\omega}
\newcommand\R{\mathbb R}
\newcommand\C{\mathbb C}

\def\eps{\varepsilon}
\def\e{\varepsilon}

\newcommand\kernel{\hbox{\rm Ker}}

\newcommand\br{\begin{remark}}
\newcommand\er{\end{remark}}
\newcommand\bp{\begin{pmatrix}}
\newcommand\ep{\end{pmatrix}}
\newcommand\be{\begin{equation}}
\newcommand\ee{\end{equation}}
\newcommand\ba{\begin{equation}\begin{aligned}}
\newcommand\ea{\end{aligned}\end{equation}}


\newcommand{\bap}{\begin{app}}
\newcommand{\eap}{\end{app}}
\newcommand{\begs}{\begin{exams}}
\newcommand{\eegs}{\end{exams}}
\newcommand{\beg}{\begin{example}}
\newcommand{\eeg}{\end{exaplem}}
\newcommand{\bpr}{\begin{proposition}}
\newcommand{\epr}{\end{proposition}}
\newcommand{\bt}{\begin{theorem}}
\newcommand{\et}{\end{theorem}}
\newcommand{\bc}{\begin{corollary}}
\newcommand{\ec}{\end{corollary}}
\newcommand{\bl}{\begin{lemma}}
\newcommand{\el}{\end{lemma}}
\newcommand{\bd}{\begin{definition}}
\newcommand{\ed}{\end{definition}}
\newcommand{\brs}{\begin{remarks}}
\newcommand{\ers}{\end{remarks}}


\newcommand{\BbbR}{{\mathbb R}}

\newcommand{\Id}{{\rm Id }}

\newcommand{\Res}{{\rm Residue}}

\newtheorem{theorem}{Theorem}[section]
\newtheorem{proposition}[theorem]{Proposition}
\newtheorem{corollary}[theorem]{Corollary}
\newtheorem{lemma}[theorem]{Lemma}
\newtheorem{definition}[theorem]{Definition}

\newtheorem{example}[theorem]{Example}
\newtheorem{remark}[theorem]{Remark}

\newtheorem{exams}[theorem]{Examples}


\newcommand\cB{{\cal  B}}

\newcommand\cW{{\cal  W}}

\newcommand\cG{{\cal  G}}

\newcommand\cN{{\cal  N}}
\newcommand\cE{{\cal  E}}

\newcommand\cM{{\mathcal M}}
\newcommand\cT{{\mathcal T}}

\newcommand\cZ{{\cal  Z}}

\newcommand\tG{{\tilde G}}


\title{
Conditional stability of unstable 
viscous shock waves in
compressible gas dynamics and MHD
}


\author{\sc \small 
Kevin Zumbrun\thanks{Indiana University, Bloomington, IN 47405;
kzumbrun@indiana.edu:
Research of K.Z. was partially supported
under NSF grants no. DMS-0300487 and DMS-0801745.
 }}
\begin{document}

\maketitle


\begin{abstract}
Extending our previous work in the strictly parabolic case,
we show that a linearly unstable Lax-type viscous shock solution
of a general quasilinear hyperbolic--parabolic system of conservation laws
possesses a translation-invariant center stable manifold within which
it is nonlinearly orbitally stable with respect to small $L^1\cap H^3$ 
perturbations, converging time-asymptotically
to a translate of the unperturbed wave.
That is, for a shock with $p$ unstable eigenvalues, we establish
conditional stability on a codimension-$p$ manifold of initial data,
with sharp rates of decay in all $L^p$.
For $p=0$, we recover the result of unconditional stability obtained
by Mascia and Zumbrun. 
The main new difficulty in the hyperbolic--parabolic
case is to construct an invariant manifold in the absence of parabolic
smoothing.
\end{abstract}

\tableofcontents
\bigbreak
\section{Introduction}
In this paper, extending our previous work in the semilinear and
quasilinear parabolic case \cite{Z5,Z6},
we study for general quasilinear hyperbolic--parabolic systems of
conservation laws including the equations of compressible gas dynamics
and MHD 
the conditional stability and existence of a center stable manifold
of a linearly unstable viscous Lax shock.
This is part of a larger program initiated in 
\cite{TZ1,TZ2,TZ3,TZ4,SS,BeSZ,Z5}
going beyond simple stability analysis to study nontrivial dynamics
and bifurcation, and
associated physical phenomena, of perturbed viscous shock waves in
the presence of {linear instability}.
As discussed for example in \cite{AMPZ,GZ,Z7}, such conditionally
stable shock waves can play an important role in asymptotic
behavior as metastable states, 
and their center stable manifolds as separatrices 
bounding the basins of attraction of nearby stable solutions.

The main issue in the present case is to construct a (translation-invariant) 
center stable manifold about a standing viscous shock wave;
once this is done, the conditional stability analysis follows 
in straightforward fashion by a combination of
the semilinear argument of \cite{Z5} and the unconditional
stability analysis of \cite{MaZ2,MaZ3,Z2} in the hyperbolic--parabolic case.
As discussed in \cite{Z6}, the technical difficulty
in constructing the center stable manifold for quasilinear equations 
is 
an apparent loss of regularity in the usual fixed point argument by 
which the center stable manifold is constructed.
See also the related discussion of \cite{LPS1,LPS2}.
We accomplish this by the method introduced in \cite{Z6} in the quasilinear 
parabolic case,
combining an implicit fixed-point scheme with suitable time-weighted 
$H^s$-energy estimates.
For related energy estimates, see \cite{TZ3}.


Consider a viscous shock solution 
\be\label{prof}
U(x,t)=\bar U(x-st), \qquad
\lim_{z\to \pm \infty}\bar U(z)=U_\pm,
\ee
of a hyperbolic--parabolic system of conservation laws
\begin{equation} \label{cons}
\begin{aligned}
U_t + F(U)_x &= (B(U)U_x)_x, \\
\end{aligned}
\end{equation}
$x \in \mathbb{R}$, $U$, $F \in \mathbb{R}^n$, $B\in\mathbb{R}^{n
\times n}$.
Profile $\bar U$ satisfies the traveling-wave ODE
\be\label{ode}
B(U)U'=F( U)- F(U_-)- s(U-U_-).
\ee
Denote $A( U)=F_U( U)$,
\be\label{pmvalues}
B_\pm:=\lim_{z\to \pm \infty} B(z)=B(U_\pm),
\qquad
A_\pm:=\lim_{z\to \pm \infty} A(z)=F_U(U_\pm).
\ee

Following \cite{TZ3,Z2,Z3}, we make the structural assumptions:
\medskip

(A1) \quad
$U=\left(\begin{array}{c} U_1 \\ U_2\end{array}\right)$,
\quad $F=\left(\begin{array}{c} F_1 \\ F_2\end{array}\right)$,
\quad $B=\left(\begin{array}{cc} 0 & 0 \\ 0 & b \end{array}\right)$, \quad
$b$ nonsingular, where $U\in \BbbR^n$, $U_1\in  \BbbR^{n-r}$, $U_2\in \BbbR^r$, and
$b\in \BbbR^{r\times r}$;
moreover, the $U_1$-coordinate $F_1( U)$ of $F$ is {\it linear in} $U$
(strong block structure).

(A2)\quad There exists a smooth, positive definite 
matrix $A^0( U)$, without loss of generality block-diagonal,
such that $A^0_{11} A_{11}$ is symmetric,
$A^0_{22}b$ is positive definite but not necessarily
symmetric, and $(A^0A)_\pm$ is symmetric.

(A3)\quad 
No eigenvector of $A_\pm$ lies in $\kernel B_\pm$
(genuine coupling \cite{Kaw,KSh}).
\medskip

To (A1)--(A3), we add the following more detailed hypotheses.
Here and elsewhere, $\sigma(M)$ denotes the spectrum of a matrix
or linear operator $M$.
\medbreak

(H0) \quad  $F, B\in C^{k}$, $k\ge 4$.

(H1) \quad  $\sigma(A_{11})$ 
real, constant multiplicity, with 
$\sigma(A_{11})<s$ or $\sigma(A_{11})>s$. 

(H2) \quad $\sigma(A_\pm)$ real, simple, and different from $s$.

\medbreak

Conditions (A1)--(A3) 
are a slightly strengthened version of the
corresponding hypotheses of \cite{MaZ4,Z2,Z3} for general systems with ``real'', or
partially parabolic viscosity, the difference lying in the strengthened
block structure condition (A1): in particular, the assumed linearity
of the $U_1$ equation.  Conditions (H0)--(H2) are
the same as those in \cite{MaZ4,Z2,Z3}.
The class of equations satisfying our assumptions, though not complete, is sufficiently
broad to include many models of physical interest,
in particular compressible Navier--Stokes equations and the equations of compressible
magnetohydrodynamics (MHD), expressed
in Lagrangian coordinates, with either ideal
or ``real'' van der Waals-type equation of state as described
in Appendix \ref{examples}.
See \cite{TZ3} for further discussion/generalization.

\br\label{laxrmk}
\textup{
Conditions (H1)--(H2) imply that $U_\pm$ are nonhyperbolic rest points of
ODE (\ref{ode}) expressed in terms of the $U_2$-coordinate, whence, by standard ODE theory,
\be\label{profdecay}
|\partial_x^r (\bar U-U_\pm)(x)|\le Ce^{-\eta|x|},
\qquad
0\le r\le k+1,
\ee
for $x\gtrless 0$, some $\eta$, $C>0$; in particular, $|\bar U'(x)| \le
Ce^{-\eta|x|}$.
}
\er

By the change of coordinates $x\to x-st$, we may assume without
loss of generality $s=0$ as we shall do from now on.
Linearizing \eqref{cons} about 
the (now stationary) solution $U\equiv \bar U$ yields linearized equations
\begin{equation} \label{L}
U_t=LU:=-(AU)_x+(BU_x)_x,
\end{equation}
\begin{equation}
B(x):= B(\bar U(x)), \quad
A(x)V:=
dF(\bar U(x))V-(dB(\bar U(x))V)\bar U_x,
\label{AandBov}
\end{equation}
for which the generator $L$ possesses \cite{He,Sat,ZH} 
both a translational zero-eigenvalue
and essential spectrum tangent at zero to the imaginary axis. 

Our first main result is the existence of a translation-invariant center stable
manifold about $\bar U$.
Define the mixed norm
\be\label{mixed}
|U|_{H^{1,2}}:=
\sqrt{|U_1|_{H^{1}}^2+ |U_2|_{H^{2}}^2}
\ee
suggested by the hyperbolic--parabolic structure (A1).

\bt\label{t:maincs}
Under assumptions (A1)--(A3), (H0)--(H2), 
there exists in an $H^{1,2}$
neighborhood of the set of translates of $\bar u$
a codimension-$p$ translation invariant Lipschitz (with respect to $H^{1,2}$) 
center stable manifold $\cM_{cs}$,
tangent to quadratic order at $\bar U$ to the center stable subspace 
$\Sigma_{cs}$ of $L$ in the sense that
\be\label{tanbd1}
|\Pi_{u}(U-\bar U)|_{H^{1,2}}\le C |\Pi_{cs}(U-\bar U)|_{H^{1,2}}^2
\ee
for $U\in \cM_{cs}$ where $\Pi_{cs}$ and $\Pi_u$ denote the center-stable
and stable eigenprojections of $L$,
that is (locally) invariant under the forward time-evolution of \eqref{cons}
and contains all solutions that remain bounded and sufficiently
close to a translate of $\bar U$ in forward time, where $p$ is the
(necessarily finite) number of unstable, i.e., positive real part,
eigenvalues of $L$.  
\et

Next, specializing a bit further, we add to (H0)--(H3) the additional
hypothesis that $\bar u$ be a {\it Lax-type shock}:
\medbreak

(H3) \quad The dimensions of the unstable subspace of $A_-$
and the stable subspace of $A_+$ sum to $n+1$.
\medbreak

We assume further the following {\it spectral genericity} conditions.
\medbreak

(D1) $L$ has no nonzero imaginary eigenvalues. 

(D2) The orbit $\bar U(\cdot)$ is a transversal connection of
the associated standing wave equation \eqref{ode}.

(D3) The associated inviscid shock $(U_-,U_+)$ is hyperbolically
stable, i.e.,
\be\label{liumajda}
\det(r_1^-,\dots, r_{P-1}^-, r_{P+1}^+, \dots , r_n^+, (U_+-U_-))\ne 0,
\ee
where $r_1^-, \dots r_{P-1}^-$ denote eigenvectors of $A_-$
associated with negative eigenvalues and
$r_{P+1}^+, \dots r_{n}^+$ denote eigenvectors of $A_+$
associated with positive eigenvalues.

\medbreak
\noindent As discussed in \cite{ZH,MaZ1}, (D2)--(D3) correspond in the
absence of a spectral gap to a generalized notion of simplicity of the 
embedded eigenvalue $\lambda=0$ of $L$.
Thus, (D1)--(D3) together correspond to the assumption that there are
no additional (usual or generalized) eigenvalues on the imaginary
axis other than the transational eigenvalue at $\lambda=0$;
that is, the shock is not in transition between different degrees of
stability, but has stability properties that are insensitive to 
small variations in parameters.

With these assumptions, we obtain our second main result
characterizing the stability properties of $\bar u$.
In the case $p=0$, this reduces to the nonlinear orbital stability
result established in \cite{MaZ1,MaZ2,MaZ3,Z2}.

\bt\label{t:mainstab}
Under (A1)--(A3),  (H0)--(H3), and (D1)--(D3), $\bar u$ is nonlinearly
orbitally stable under sufficiently small perturbations
in $L^1\cap H^3$ lying on the codimension $p$ center stable 
manifold $\cM_{cs}$ of $\bar u$ and its translates, 
where $p$ is the number of unstable eigenvalues of $L$,
in the sense that, for some $\alpha(\cdot)$, all $L^p$,
\ba\label{bounds}
|U(x, t)-\bar U(x-\alpha(t))|_{L^p}&\le
C(1+t)^{-\frac{1}{2}(1-\frac{1}{p})}
|U(x,0)-\bar U(x)|_{L^1\cap H^3},
\\
|U(x, t)-\bar U(x-\alpha(t))|_{H^3}&\le
C(1+t)^{-\frac{1}{4}} |U(x,0)-\bar U(x)|_{L^1\cap H^3},\\
\dot \alpha(t) &\le C(1+t)^{-\frac{1}{2}} |U(x,0)-\bar U(x)|_{L^1\cap H^3},
\\
\alpha(t) &\le C |U(x,0)-\bar U(x)|_{L^1\cap H^3}.
\ea
Moreover, it is orbitally unstable with respect to small $H^{1,2}$ perturbations
not lying in $\cM$, in the sense that the corresponding solution leaves
a fixed-radius neighborhood of the set of translates of $\bar U$ in
finite time.
\et

\br
\textup{
It is straightforward, combining the pointwise argument of \cite{RZ}
with the observations of \cite{Z6} in the strictly parabolic case,
to extend Theorem \ref{t:mainstab} to the case of nonclassical under- 
or overcompressive shocks with (D1)--(D3) suitably modified as described
in \cite{Z6}. 
This gives at the same time new information even in the Lax case,
including convergence of the phase $\alpha$ to a limit $\alpha(+\infty)$
at rate $(1+t)^{-1/2}$, with $|\dot \alpha|\le C(1+t)^{-1}$.
}
\er

\subsection{Discussion and open problems}\label{discussion}

For results of a similar nature in the context of the nonlinear 
Schr\"odinger equation, but obtained by rather different techniques,
we refer the reader to \cite{Sc}.
The method of \cite{Sc} consists of constructing the stable manifold
to a known center manifold using detailed linearized and nonlinear
estimates similar to those used to prove stability in the spectrally
stable case.
That is, the existence and stability arguments are carried out at the
same time.

This makes possible a somewhat finer analysis than the one
we carry out here (in keeping with the somewhat weaker stability
properties for the nonlinear Schr\"odinger equation, for which
the linearized operator about the wave possesses essential
spectrum on the whole of the imaginary axis lying outside a finite
ball), but also a somewhat less general one; in particular, it
is required that the linearized operator possess
no pure imaginary eigenvalues other than zero.
Note that we make no assumptions
on the pure imaginary spectrum of the linearized operator in our construction
of the center stable manifold, but only in the proof of conditional stability.

An interesting issue pointed out in \cite{TZ3}
is that the strong block structure assumptions (A1)--(A3) used
to obtain the key variational 
energy estimates on which our arguments are based hold for the
equations of gas dynamics or MHD written in Lagrangian coordinates,
but not in Eulerian coordinates, and indeed the energy estimates
themselves do not hold in that case; see Appendix A, \cite{TZ3}
for further discussion.
On the other hand, the 
Lagrangian and Eulerian formulations are equivalent by a uniquely
specified (up to translation, which may be factored out) transformation,
and so the results 
do hold
in Eulerian coordinates.
It is an interesting open question whether the results of this paper
hold for general symmetric hyperbolic--parabolic systems 
as defined in \cite{MaZ4,Z2,Z3}, which include among other examples
both Lagrangian and Eulerian formulations.


\section{Existence of Center Stable Manifold}\label{s:existence}

Defining the perturbation variable $V:=U-\bar U$, we
obtain after a brief computation the nonlinear perturbation
equations
\be\label{npert}
V_t - LV=N(V),
\ee
where $L$ as in \eqref{L} denotes the linearized operator about the wave and
$N=\cN(V)_x$ is a quadratic order residual, with
(recalling (A1))
\ba\label{N1}
\cN(V)&:= 
 \Big(B(\bar U+V)(\bar U+V)_{x}-B(\bar U)\bar U_{x} 
-B(\bar U)V_{x}- (dB(\bar U)V) \bar U_{x} \Big)  \\
&\quad
-\Big( F(\bar U+V) - F(\bar U) -dF(\bar U)V \Big)\\
&=
\bp O(|V|^2)\\ O(|V|^2 + |V||V_{2,x}|) \ep.
\ea
We seek to construct a Lipschitz $H^{1,2}$-local 
center stable manifold about the
equilibrium $v\equiv 0$, that is, a locally invariant Lipschitz manifold
tangent at $v\equiv 0$ to the center stable subspace $\Sigma_{cs}$,
that is quadratic-order tangent in the sense that
$|\Pi_{u}V|_{H^{1,2}}\le C |\Pi_{cs}V|_{H^{1,2}}^2$.

\subsection{Preliminary estimates}

Conditions (A2)--(A3) imply that
$
\Re\sigma \left( i\xi A - \xi^2 B\right)_\pm \leq -\frac{\theta_0 |\xi|^2}{1+
|\xi|^2}
$
for some $\theta_0>0,$ for all $\xi \in\BbbR$ \cite{KSh},
which in turn implies by standard spectral theory \cite{He}
that the essential spectrum of $L$ lies entirely in the neutrally stable
complex half-plane $\Re \lambda\le 0$, and the unstable
half-plane $\Re \lambda>0$ either contains finitely many eigenvalues
of finite multiplicity, or else consists entirely of eigenvalues,
as is easily shown (by energy estimates, for example) to be impossible.
Thus, there is a well-defined unstable subspace $\Sigma_u$ 
consisting of the direct
sum of the finitely many generalized eigenfunctions in the positive half-plane,
and an associated unstable eigenprojection $\Pi_u$.
Likewise, there is a complementary center--stable subspace $\Sigma_{cs}$
determined by the range of the complementary projection 
$\Pi_{cs}:=\Id- \Pi_u$.

\begin{proposition}[\cite{MaZ3,Z2,Z3}]\label{linest}
Under assumptions (A1)--(A3), (H0)--(H2),
$L$ generates a $C^0$ semigroup $e^{Lt}$ satisfying
 \begin{equation} \label{bound-pde} 
\begin{aligned}
| e^{t L}\Pi_{cs}  |_{L^2\to L^2}  &\leq  C_\o   e^{\o t}, \\
| e^{-t L}\Pi_u  |_{L^2\to H^{1,2}}  &\leq  C_\o e^{-\b t},\\
\end{aligned}
 \end{equation}
 for some $\b > 0,$ and for all $\o > 0,$ for all $t\ge 0$.
\end{proposition}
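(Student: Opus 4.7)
Since this is cited from \cite{MaZ3,Z2,Z3}, the plan is to assemble the ingredients from the structural hypotheses rather than reprove everything from scratch. First I would establish that $L$ generates a $C^0$ semigroup on $L^2$. The symmetrizer $A^0$ provided by (A2), together with the block structure (A1) and the genuine coupling (A3), yields a Friedrichs-type energy estimate of the form $\Re\langle A^0 V, LV\rangle \le C|V|_{L^2}^2$ after an integration by parts that shifts the $\bar U_x$-dependent commutator terms to lower order (using the exponential decay \eqref{profdecay}). Combined with the parabolic bound $\Re\langle A^0 V, LV\rangle \le -\theta |V_{2,x}|_{L^2}^2 + C|V|_{L^2}^2$ on the parabolic block, this gives a dissipative estimate in the equivalent inner product $\langle A^0 \cdot,\cdot\rangle$, from which the Lumer--Phillips theorem yields generation of a $C^0$ semigroup with some crude exponential bound $|e^{tL}|_{L^2\to L^2}\le Ce^{\gamma t}$.

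Next I would analyze the spectrum. The Kawashima calculation $\Re\sigma(i\xi A_\pm -\xi^2 B_\pm)\le -\theta_0|\xi|^2/(1+|\xi|^2)$, already noted in the excerpt, combined with the standard theory of \cite{He,ZH,Sat}, shows that the essential spectrum of $L$ is contained in $\{\Re\lambda\le 0\}$ and that the portion of $\sigma(L)$ in any half-plane $\{\Re\lambda\ge -\delta\}$ with $\delta<\theta_0/2$ small consists only of isolated eigenvalues of finite multiplicity. In particular the unstable set $\sigma(L)\cap\{\Re\lambda>0\}$ is either finite-dimensional or all of the open right half-plane; the second alternative is ruled out by a short energy argument (testing against $A^0V$ in $L^2$ gives a resolvent bound for $\Re\lambda$ sufficiently large, hence $\sigma(L)\cap\{\Re\lambda>R\}=\emptyset$ for some $R$, and then by connectedness considerations the unstable spectrum is finite). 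This delivers the projections $\Pi_u$, $\Pi_{cs}$ as Dunford integrals around the finite unstable set, and makes $\Sigma_u=\Range\Pi_u$ finite-dimensional and invariant.

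The bound on $e^{-tL}\Pi_u$ is then immediate: $L|_{\Sigma_u}$ is a finite-dimensional operator with spectrum in $\{\Re\lambda\ge\beta_0\}$ for some $\beta_0>0$, so $e^{-tL}|_{\Sigma_u}$ decays like $e^{-\beta t}$ for any $\beta<\beta_0$; the upgrade from $L^2$ to $H^{1,2}$ costs only a finite constant since on the finite-dimensional space $\Sigma_u$ all norms are equivalent and each generalized eigenfunction lies in $H^\infty$ (in fact decays exponentially in $x$, as the eigenvalue equation is a regular ODE with coefficients converging to $A_\pm$, $B_\pm$, whose characteristic roots at $\lambda$ in the unstable set have nonzero real part by (H1)--(H2) and the location of $\sigma(L|_{\Sigma_u})$).

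The delicate part is the bound $|e^{tL}\Pi_{cs}|_{L^2\to L^2}\le C_\omega e^{\omega t}$ for every $\omega>0$. I would prove it by combining a resolvent estimate on $\Pi_{cs}L^2$ with the Gearhart--Pr\"uss theorem, or more concretely by the inverse Laplace representation used in \cite{MaZ3,Z2}: one writes
\begin{equation*}
e^{tL}\Pi_{cs}f=\frac{1}{2\pi i}\int_{\eta-i\infty}^{\eta+i\infty} e^{\lambda t}(\lambda-L)^{-1}\Pi_{cs}f\,d\lambda
\end{equation*}
for $\eta>0$ smaller than any eigenvalue in $\Sigma_u$ and larger than the supremum of $\Re\sigma(L|_{\Sigma_{cs}})$, and one estimates the resolvent on the line $\Re\lambda=\eta$ via the symmetrizer estimate above. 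The main obstacle here, and the reason $\omega$ appears rather than $0$, is that $\sigma(L|_{\Sigma_{cs}})$ touches the imaginary axis at $\lambda=0$ through the essential spectrum generated by the long-wavelength behavior of $i\xi A_\pm-\xi^2B_\pm$: accordingly one cannot take $\omega=0$ without a much more refined pointwise Green-function analysis. For the present purposes of constructing the center stable manifold, the loose bound with arbitrary $\omega>0$ suffices and follows directly from the resolvent estimate together with the spectral separation of $\Sigma_u$ from $\Sigma_{cs}$.
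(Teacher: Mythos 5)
Your proposal follows essentially the same route as the paper: symmetrizer-based energy estimates give generation of the $C^0$ semigroup and uniform resolvent bounds on half-planes $\{\Re\lambda\ge\eta\}$ away from the finitely many unstable eigenvalues, the bound on $e^{tL}\Pi_{cs}$ then follows by the Gearhart--Pr\"uss theorem applied on the center-stable subspace (the paper cites this as ``Pr\"uss' theorem restricted to the center--stable subspace''), and the bound on $e^{-tL}\Pi_u$ follows from finite-dimensionality of $\Sigma_u$, norm equivalence, and the exponential spatial decay of the generalized eigenfunctions. Your additional discussion of the spectral separation (Kawashima-type dissipativity locating the essential spectrum, ruling out the alternative that $\{\Re\lambda>0\}$ is all eigenvalues) corresponds to material the paper places in the text immediately preceding the proposition rather than in the proof itself, so there is no real divergence.
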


\begin{proof}
Straightforward energy estimates yield resolvent bounds 
verifying the $C^0$-semigroup property; see \cite{MaZ3,Z2} 
and especially \cite{Z3}, Prop. 3.6.
The same estimates yield
$$
|(\lambda-L)^{-1}|_{L^2\to L^2}\le C(\eta)
$$
for $\lambda \ge \eta$ and $|\lambda|$ sufficiently large, 
$\eta>0$ arbitrary, whereupon
\eqref{bound-pde}(i) follows by Pr\"uss' theorem
restricted to the center--stable subspace \cite{Pr,ProK,KS};
see \cite{Z3}, Appendix A for further discussion,
in particular Rmk. 6.23. 
Bound \eqref{bound-pde}(ii) follows
by equivalence of norms for finite-dimensional spaces and standard
finite-dimensional ODE estimates.
\end{proof}

Introducing a $C^\infty$ cutoff function
$$
\rho(x) =\begin{cases}
1  & | x | \leq 1, \\ 
0 &  | x | \geq 2,\end{cases}
$$
and recalling mixed-norm definition \eqref{mixed}, let 
$$
 N^\delta (V) := 
\rho\Big( \frac{ | V |_{H^{1,2}}}{\delta}\Big) N(V).
$$

 \begin{lemma}\label{trunc} 
Assuming (A1)--(A3), (H0)--(H2), the map $N^\delta: H^{1,2} 
\to L^2 $ is $C^{k+1}$ and its Lipschitz norm with respect to $V$ 
is $O(\delta)$ as $\delta\to 0.$
Moreover, 
\be\label{Nquad}
|N^\delta(V)|_{L^2}\le C|V|_{H^{1,2}}^2.
\ee
 \end{lemma}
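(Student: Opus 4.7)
The plan is to dispatch three items in sequence: the quadratic $L^2$ bound \eqref{Nquad}, the Lipschitz estimate of $O(\delta)$, and the $C^{k+1}$ smoothness, with the quadratic bound serving as the workhorse for the other two. I would establish \eqref{Nquad} by inspecting $N=\cN(V)_x$ term by term. Differentiating the structural expression \eqref{N1} produces terms of three types: products of the form $V\cdot V_x$ from $\partial_x$ of the $O(|V|^2)$ pieces; products $V_x\cdot V_{2,x}$ and $V\cdot V_{2,xx}$ from $\partial_x$ of the $|V||V_{2,x}|$ contribution in the second component of $\cN$; plus cubic and higher-order remainders in $(V,V_x,V_{2,xx})$. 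For $V_1\in H^1$ and $V_2\in H^2$, the one-dimensional Sobolev embedding $H^1(\mathbb{R})\subset L^\infty(\mathbb{R})$ yields $|V|_{L^\infty}\le C|V|_{H^{1,2}}$ and $|V_{2,x}|_{L^\infty}\le C|V|_{H^{1,2}}$, so each quadratic term is controlled in $L^2$ by $C|V|_{H^{1,2}}^2$; on the support of $\rho$, where $|V|_{H^{1,2}}\le 2\delta$, the higher-order remainders are absorbed, yielding \eqref{Nquad}.

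For the Lipschitz bound, I would work with the smooth surrogate $\rho(|V|_{H^{1,2}}^2/\delta^2)$ (reshaping $\rho$ accordingly) to sidestep the non-differentiability of the Hilbert norm at the origin, and apply the product rule. The cutoff-derivative contribution is bounded on the support of $\rho'$, where $|V|_{H^{1,2}}\le 2\delta$, by $C\delta^{-2}|V|_{H^{1,2}}|W|_{H^{1,2}}\,|N(V)|_{L^2}\le C\delta|W|_{H^{1,2}}$ using \eqref{Nquad}. The remaining contribution $\rho\cdot DN(V)[W]$ is handled by noting that $DN(V)$ is linear in $V$ to leading order (by the quadratic structure of $\cN$), and the same Sobolev estimates used above then give $|DN(V)[W]|_{L^2}\le C|V|_{H^{1,2}}|W|_{H^{1,2}}\le 2C\delta|W|_{H^{1,2}}$ on the support of $\rho$. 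For the $C^{k+1}$ regularity, the cutoff factor is $C^\infty$ on $H^{1,2}$ since $|V|_{H^{1,2}}^2$ is a smooth quadratic form, while the Nemytskii-type operator $V\mapsto N(V)$ inherits the smoothness of $F,B\in C^k$ via the pointwise substitution made licit by $H^{1,2}\subset L^\infty$, by standard results.

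The main technical point is the $V\cdot V_{2,xx}$ term arising in $N(V)$: because $V_{2,xx}$ lies only in $L^2$, it forces us to place $V$ itself in $L^\infty$, which in turn requires the full hyperbolic-parabolic mixed norm $|\cdot|_{H^{1,2}}$ rather than the weaker $H^1$. This is precisely the motivation for the definition \eqref{mixed} and is where partial parabolicity imposes its constraints on the rest of the argument; everything else is a routine consequence of the quadratic structure of $\cN$ combined with the support property of $\rho$.
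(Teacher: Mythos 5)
Your argument is correct and takes essentially the paper's route: \eqref{Nquad} from the one-dimensional embedding $H^1(\mathbb{R})\subset L^\infty$ together with Moser-type product estimates in the mixed $H^{1,2}$ norm, and the $O(\delta)$ Lipschitz constant from splitting $dN^\delta$ into a cutoff-derivative term and a $\rho\cdot dN$ term, each of size $O(\delta)$ on $\operatorname{supp}\rho$ where $|V|_{H^{1,2}}\le 2\delta$. Two minor remarks: the paper keeps the unsquared cutoff $\rho(|V|_{H^{1,2}}/\delta)$, which is already smooth because $\rho$ is locally constant near $V=0$ (your squared surrogate works equally well but is not required); and in your last paragraph it is the requirement $V_{2,xx}\in L^2$, not the $L^\infty$ control of $V$ (which $H^1$ alone already provides), that forces the $H^2$ component of the mixed norm.
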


 \begin{proof}
 The norm in $H^{1,2}$ is a quadratic form, hence the map
 $$ V \in H^{1,2} \mapsto \rho\Big( \frac{ | V |_{H^2}}{\delta}\Big) 
\in \R_+$$
 is smooth, and $N^\delta$ is as regular as $N.$ 
By Moser's inequality, 
\ba\nonumber
| N(V)|_{L^2} &\le C(|V|_{H^1}|V|_{L^\infty}
+ |\partial_x V_2|_{H^1}|V|_{L^\infty}|
+ |\partial_x V_2|_{L^\infty}|V|_{H^1}|)
\le C|V|_{H^{1,2}}^2
\ea
and, similarly,
$| N(V^1)-N(V^2))|_{L^2} \le 
C|V^1-V^2|_{H^{1,2}}| \sup_{j=1,2} C|V^j|_{H^{1,2}}|$
and thus $|dN|_{H^{1,2}\to L^2}\le C|V|_{H^{1,2}}$
so long as $|V|_{H^{1,2}}$ remains bounded, in particular
for $|V|_{H^{1,2}}\le \delta$.
Thus, 
$ |N^\delta(V)|_{L^2}\le |N(V)|_{L^2}\le C|V|_{H^{1,2}}^2 $
for $|V|_{H^{1,2}}\le \delta$, while 
$ N^\delta(V)=0$ for $|V|_{H^{1,2}}\ge \delta$, 
verifying \eqref{Nquad}.
The Lipschitz bound follows, likewise, by
\ba\nonumber
| N^\delta(V^1) - N^\delta(V^2)|_{L^2} 
& \leq  \Big| \rho\Big( \frac{ | V^1 |_{H^{1,2}}}{\delta}\Big) -   
 \rho\Big( \frac{ | V^2 |_{H^{1,2}}}{\delta}\Big) \Big|_{L^\infty} 
| N(V^1)|_{L^2} \\ 
 & \quad  +  
\Big| \rho\Big( \frac{ | V^2 |_{H^{1,2}}}{\delta}\Big) \Big|_{L^\infty}  
| N(V^1) - N(V^2)|_{L^2} \\
 & \leq  3 | V^1 - V^2|_{H^{1,2}} \\
&\times
\Big(
\sup_{|V|_{H^{1,2}} < \delta} \frac{| N(V)|_{L^2}}{\delta}
+   
\sup_{|V|_{H^{1,2}} < \delta} | dN(V)|_{H^{1,2}\to L^2}
\Big) .
\ea
 \end{proof}

\begin{corollary}\label{integrand}
Under assumptions (A1)--(A3), (H0)--(H2),
 \begin{equation} \label{bound-integrand} 
\begin{aligned}
| e^{t L}\Pi_{cs} N^\delta |_{H^{1,2}\to L^2}  &\leq  
C_\o \delta e^{\o t}, \\
| e^{-t L}\Pi_u N^\delta |_{H^{1,2}\to H^{1,2}}  &\leq  C_\o \delta e^{-\b t}, \\
\end{aligned}
 \end{equation}
for some $\b > 0,$ and for all $\o > 0,$ for all $t\ge 0$,
with Lipschitz bounds
 \begin{equation} \label{bound-integrand-lip} 
\begin{aligned}
| e^{t L}\Pi_{cs} dN^\delta |_{H^{1,2}\to L^2}  &\leq  
C_\o \delta  e^{\o t}, \\
| e^{-t L}\Pi_u dN^\delta |_{H^{1,2}\to H^{1,2}}  &\leq  C_\o  \delta e^{-\b t}. \\
\end{aligned}
 \end{equation}
\end{corollary}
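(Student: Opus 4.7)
This corollary is essentially a packaging step: the content is already contained in Proposition \ref{linest} and Lemma \ref{trunc}, and my plan is to obtain the bounds by straightforward composition of operator norms.

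First I would extract the precise operator-norm statement needed from Lemma \ref{trunc}. Since $N^\delta(0)=0$ and the map $N^\delta\colon H^{1,2}\to L^2$ has Lipschitz norm $O(\delta)$, this gives $|N^\delta(V)|_{L^2}\le C\delta|V|_{H^{1,2}}$, i.e.\ $|N^\delta|_{H^{1,2}\to L^2}=O(\delta)$ as an operator. The analogous uniform bound $|dN^\delta(V)|_{H^{1,2}\to L^2}=O(\delta)$ is essentially a restatement of the Lipschitz estimate: differentiating $N^\delta(V)=\rho(|V|_{H^{1,2}}/\delta)N(V)$ produces two terms, one of which is bounded by $|\rho|_\infty|dN(V)|_{H^{1,2}\to L^2}\le C|V|_{H^{1,2}}\le C\delta$ on the support of the cutoff, the other by $|\rho'|_\infty\delta^{-1}|N(V)|_{L^2}=O(\delta^{-1})\cdot O(\delta^2)=O(\delta)$ on the annulus $\delta\le|V|_{H^{1,2}}\le 2\delta$, using the quadratic bound \eqref{Nquad}.

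Then I would simply compose with the semigroup bounds \eqref{bound-pde}. For the first inequality, I view $e^{tL}\Pi_{cs}N^\delta\colon H^{1,2}\to L^2$ as $N^\delta\colon H^{1,2}\to L^2$ of norm $O(\delta)$ followed by $e^{tL}\Pi_{cs}\colon L^2\to L^2$ of norm $C_\omega e^{\omega t}$; the product bound is $C_\omega\delta e^{\omega t}$. For the second inequality, I factor $e^{-tL}\Pi_u N^\delta\colon H^{1,2}\to H^{1,2}$ as $N^\delta\colon H^{1,2}\to L^2$ followed by $e^{-tL}\Pi_u\colon L^2\to H^{1,2}$ of norm $C_\omega e^{-\beta t}$, obtaining $C_\omega\delta e^{-\beta t}$. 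The Lipschitz bounds \eqref{bound-integrand-lip} follow in exactly the same way, with $dN^\delta$ in place of $N^\delta$, since composition with a bounded linear operator preserves Lipschitz constants up to the operator norm.

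There is no genuine obstacle here: this is a bookkeeping corollary whose purpose is to repackage the preceding two results into the exact form needed for the fixed-point scheme constructing the center stable manifold. The only mild subtlety is the use of the $L^2\to H^{1,2}$ smoothing bound \eqref{bound-pde}(ii) for $e^{-tL}\Pi_u$, which is available only on the finite-dimensional unstable subspace, and it is precisely this asymmetry that will force the implicit/time-weighted fixed-point construction in the following section.
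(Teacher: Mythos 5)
Your argument is correct and is essentially the argument the paper intends: the corollary is stated without proof precisely because it is an immediate composition of the operator-norm bounds from Proposition \ref{linest} with the $O(\delta)$ Lipschitz bound (and $N^\delta(0)=0$, or equivalently the quadratic bound \eqref{Nquad} restricted to $|V|_{H^{1,2}}\le 2\delta$) from Lemma \ref{trunc}. Your observation that the $L^2\to H^{1,2}$ gain for $e^{-tL}\Pi_u$ is what makes the second line work, and that this asymmetry is what the later implicit/time-weighted construction must compensate for, accurately reflects the role this corollary plays in the paper.
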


\subsection{Fixed-point iteration scheme}\label{it}

Applying projections $\Pi_j$, $j=cs,u$ to the truncated equation
\be\label{qpert}
V_t-LV=N^\delta(V),
\ee
we obtain using the variation of constants formula equations
$$
\Pi_j V(t)=
e^{L(t-t_{0,j})}\Pi_j V(t_{0,j}) + \int_{t_{0,j}}^t e^{L(t-s)}\Pi_j  N^\delta(V(s))\,ds,
$$
$j=cs,u$, so long as the solution $V$ exists,
with $t_{0,j}$ arbitrary.
Assuming growth of at most $|V(t)|_{H^2}\le Ce^{\tilde \theta t}$ in positive 
time, we find for $j=u$ using bounds \eqref{bound-pde}(ii)
and \eqref{bound-integrand-lip}(ii) 
that, as $t_{0,u}\to +\infty$, the first term
$e^{L(t-t_{0,u})}\Pi_u V(t_{0,u})$ converges to zero while
the second, integral term
converges to $\int_{t}^{+\infty} e^{L(t-s)}\Pi_u  N^\delta(V(s))\,ds$,
so that, denoting $w:=\Pi_{cs}V$, $z:=\Pi_u V$, we have
\be\label{vu}
z(t)=\cT(z,w)(t):=  -\int_t^{+\infty} e^{L(t-s)}\Pi_u  N^\delta( (w+z)(s))\,ds.
\ee

Likewise, choosing $t_{0,cs}=0$, we have
\be\label{vcs}
w(t)=
e^{Lt}\Pi_{cs} w_0 + \int_{0}^t e^{L(t-s)}\Pi_{cs}  N^\delta( (w+z)(s))\,ds,
\ee
$w_0:=\Pi_{cs}V(0)$.
On the other hand, 
we find from the original differential equation projected onto
the center stable component
that $w$ satisfies the Cauchy problem
\ba\label{diffvcs}
w_t- Lw&=
\Pi_{cs}N^\delta(w+z)\\
\ea
with initial data $ w_0=\Pi_{cs}V(0)$ given at $t=0$.
We shall use these two representations together to obtain optimal
estimates, the first for decay, through standard linear semigroup
estimates, and the second for regularity,
through the nonlinear damping estimates \eqref{Ebounds1}--\eqref{Ebounds2}
and \eqref{LEbounds1}--\eqref{LEbounds2} below.

Viewing \eqref{vcs}, or alternatively \eqref{diffvcs},
as determining $w=\cW(z,w_0)$ as a function of $z$, we seek $z$ as
a solution of the fixed-point equation
\be\label{fixedpt}
z=\tilde \cT(z,w_0):=\cT(z, \cW(z,w_0)).
\ee
As compared to the standard ODE construction of, e.g., \cite{B,VI,TZ1,Z5},
in which \eqref{vu}--\eqref{vcs} together are considered as a fixed-point
equation for the joint variable $(w,z)$, this amounts to treating
$w$ implicitly.
This is a standard device in situations of limited regularity;
see, e.g., \cite{CP,GMWZ,RZ}.

It remains to show, first, that $\cW$, hence $\tilde\cT$, is well-defined 
on a space of slowly-exponentially-growing functions and,
second, that $\tilde\cT$ is contractive on that space, 
determining a $C^k$ solution
$z=z(w_0)$ similarly as in the usual ODE construction.
We carry out these steps in the following subsections.

\subsection{Nonlinear energy estimates}\label{nde}

Define now the negatively-weighted sup norm 
$$
\|f\|_{-\eta}:=\sup_{t\ge 0}e^{-\eta t}|f(t)|_{H^{1,2}},
$$
noting that $ |f(t)|_{H^2}\le e^{\tilde \theta t} \|f\|_{-\tilde \theta}$ 
for all $t\ge 0$, and denote by $\cB_{-\eta}$ the Banach space
of functions bounded in $\|\cdot\|_{-\eta}$ norm.
Define also the auxiliary norm 
$$
\|f\|_{L^2_{-\eta}}:=\sup_{t\ge 0}e^{-\eta t}|f(t)|_{L^2}.
$$

\bl[\cite{Z5}]\label{projlem1}
Under assumptions (A1)--(A3), (H0)--(H2), 
for all $1\le p\le \infty$, $0\le r\le 4$, 
\ba
|\Pi_{u}|_{L^p\to W^{r,p}} \, , \;
|\Pi_{cs}|_{W{r,p}\to W^{r,p}}&\le C.
\ea
\el

\begin{proof} 
Recalling that $L$ has at most finitely many unstable eigenvalues,
we find that $\Pi_u$ may be expressed as
$$
\Pi_u f= \sum_{j=1}^p \phi_j(x) \langle \tilde \phi_j, f\rangle,
$$
where $\phi_j$, $j=1, \dots p$ are generalized right eigenfunctions of
$L$ associated with unstable eigenvalues $\lambda_j$, 
satisfying the generalized eigenvalue equation $(L-\lambda_j)^{r_j}\phi_j=0$,
$r_j\ge 1$, and $\tilde \phi_j$
are generalized left eigenfunctions.
Noting that $\phi_j$, $\tilde \phi_j$ and derivatives decay exponentially
by standard theory \cite{He,ZH,MaZ1}, and estimating 
$$
|\partial_x^j\Pi_u f|_{L^p}=|\sum_j \partial_x^j\phi_j \langle \tilde \phi_j f
\rangle|_{L^p}\le
\sum_j |\partial_x^j\phi_j|_{L^p} |\tilde \phi_j|_{L^q} |f|_{L^p}
\le C|f|_{L^p}
$$
for $1/p+1/q=1$, we obtain the claimed
bounds on $\Pi_u$, from which the bounds on
$\Pi_{cs}={\rm Id}-\Pi_u$ follow immediately.
\end{proof}

\bpr\label{localdamp}
Assuming (A1)--(A3), (H0)--(H2),
denote $A^0:=A^0(\bar u(x))$ and take
without loss of generality $\sigma(A_{11}< 0$.
Then, there exist a skew-symmetric matrix $K(x)$ and
constants $C>0$, $C_*>0$ such that, defining the
scalar weight $\alpha$ by 
$\alpha(0)=1$ and $ \alpha_x= C_*|\bar U_x|\alpha$,
the quadratic form
$$
\cE(w):=
\langle (C^2 A^0 + CK\partial_x - A^0 \partial_x^2)\alpha w, w\rangle
$$
is equivalent to $|w|_{H^1}^2$ and, for $w_t-Lw=\bp 0\\r\ep$,
\be\label{Erel}
\partial_t \cE(w)\le -\theta |w|_{H^{1,2}}^2 + C(|f|_{L^2} + |w|_{L^2}^2).
\ee
\epr

\begin{proof}
This follows by a linear version of the argument for Proposition 4.15,
\cite{Z3}, somewhat simplified by the stronger block
structure assumptions (A1)--(A3) made here.
We first observe that, by symmetry of $(A^0A)_{11}$ and
$(A^0A)_\pm$,
$$
A^0A(x)= \tilde A(x)+ 
\bp 0 & O(|\bar U_x|)\\
O(|\bar U_x|)&
O(|\bar U_x|)\ep
$$
where $\tilde A$ is the symmetric part of $A^0A$.
Next, we recall from \cite{Kaw,KSh} that (A2)--(A3) imply
existence of skew-symmetric $K_\pm$ such that $\Re (KA + B)_\pm >0$.
Defining $K(x)$ as a smooth interpolant between $K_\pm$ defined
by arc length along the path of $\bar U$ between $U_\pm$,
we obtain the result by a straighforward (if somewhat lengthy)
computation for $C>0$ and $C_*>>C$ $C>0$ sufficiently large,
where ``good'' terms 
$$
-\langle \partial_x^2 w, \alpha A^0B \partial_x^2 w>
\le -\theta \langle \partial_x^2 w_2, \alpha \partial_x^2 w_2|,
$$
$$
-C^2\langle \partial_x w, \alpha A^0B \partial_x w>
\le -C^2\theta \langle \partial_x w_2, \alpha \partial_x w_2|,
$$
$$
\begin{aligned}
-C\langle \partial_x w, \Re (KA + A^0B)\alpha \partial_x w\rangle
&\le -C\theta \langle \partial_x w, \alpha \partial_x w\rangle\\
&+
C_2\langle \partial_x w_1, |\bar U_x| \alpha \partial_x w_1\rangle
+C_2 \langle \partial_x w_2, \alpha \partial_x w_2\rangle,
\end{aligned}
$$
and
$$
\begin{aligned}
-\langle \partial_x w_1, (\alpha A^0A)_{11}\partial_x^2 w_1\rangle
&=
\langle \partial_x w_1, \partial_x (\alpha A^0A)_{11}\cdot \partial_x w_1\rangle
\\
& \le -\theta C_* \langle \partial_x w_1, |\bar U_x|\alpha \partial_x w_1\rangle
\end{aligned}
$$
resulting by rearrangement/integration by parts of the various components
of $\partial_t \cE(w)$,
when summed together, after absorbing remaining terms, give a contribution of
$
-\theta |w|_{H^{1,2}}^2 + C(|w|_{L^2}^2  + |f|_{L^2}^2),
$
verifying the result.
See \cite{MaZ4,Z2,Z3} for further details.
\end{proof}

\bc\label{damp1}
Under assumptions (A1)--(A3), (H0)--(H2),
for $\|z\|_{L^2_{-\eta}}$ bounded and $\delta$ sufficiently small, 
the solution $w$ of \eqref{diffvcs} exists for all
$t\ge 0$ and, for any constant $\theta > 0$ and some $C=C(\theta)$, 
\begin{equation}\label{Ebounds1}
\begin{aligned}
 \int_0^t e^{-\theta s} & |w|_{H^{1,2}}^2(s) \,ds 
+e^{-\theta t}|w_{t}|_{H^1}^2
&\le
C|w_{0}|_{H^1}^2
+ C \int_0^t e^{-\theta s} (|w|_{L^2}^2 + |z|_{L^2}^2) (s)\,ds ;
\end{aligned}
\end{equation}
likewise, for $\theta>2\eta$,
\begin{equation}\label{Ebounds2}
\begin{aligned}
 \int_t^{+\infty} e^{\theta (t-s) }  |w|_{H^{1,2}}^2(s) \,ds &\le
C|w|_{H^1}^2(t)
+ C \int_t^{+\infty} e^{\theta (t- s)} (|w|_{L^2}^2 + |z|_{L^2}^2) (s)\,ds \\
& \le
Ce^{\theta t}|w_{0}|_{H^1}^2 
+C e^{2\eta t}(\|w\|_{L^2_{-\eta}}^2 + \|z\|_{L^2_{-\eta}}^2 ).
\end{aligned}
\end{equation}
\ec

\begin{proof}
Observing that 
\be\label{nlbd}
|\Pi_{cs}N^\delta(w+z)|_{L^2}\le
\bp 0\\
C\delta(|w|_{H^{1,2}}+ |z|_{H^{1,2}})\ep,
\ee
we obtain by Proposition \ref{localdamp} the inequality
$$
\begin{aligned}
\partial_t \cE(w)
& \leq -{\tilde \theta}|w|_{H^{1,2}}
+ C\delta \big(|w|_{H^{1,2}}^2 + |z|_{H^{1,2}}^2\big)+
 C \big(|w|_{L^2}^2 + |z|_{L^2}^2\big)\\
\end{aligned}
$$
for some $\tilde \theta>0$,
which, for $\delta$ sufficiently small, gives
$$
\begin{aligned}
\partial_t \cE(w)
& \leq -{\tilde \theta}|w|_{H^{1,2}}
 + C_2 \big(|w|_{L^2}^2 + |z|_{L^2}^2\big)\\
\end{aligned}
$$
by $|z|_{H^{1,2}}|\le C|z|_{L^2}$
(equivalence of finite-dimensional norms).
From this inequality, global $H^1$ existence follows immediately
and \eqref{Ebounds1} and the first line of \eqref{Ebounds2} follow
readily by Gronwall's inequality together with
equivalence of $\cE$ and $|w|_{H^1}^2$.
For further discussion, see
\cite{Z6}, proof of Proposition 2.6.

Bounding $|w(t)|_{H^1}^2$ in the second line of \eqref{Ebounds2}
using \eqref{Ebounds1} now yields
\begin{equation}\label{Ebounds}
\begin{aligned}
 \int_t^{+\infty} e^{\theta (t-s) }  |w|_{H^{1,2}}^2(s) \,ds &\le
Ce^{\theta t}\Big(|w_{0}|_{H^1}^2 
+  \int_0^{+\infty} e^{-\theta  s} (|w|_{L^2}^2 + |z|_{L^2}^2) (s)\,ds
\Big),
\end{aligned}
\end{equation}
whereupon the final line of \eqref{Ebounds2} then follows by
$$
\begin{aligned}
 \int_0^{+\infty} e^{\theta (t- s)} 
(|\dot w|_{L^2}^2 + |\dot z|_{L^2}^2) (s)\,ds 
&\le 
\Big( \e^{\theta t}\int_t^{+\infty} e^{(2\eta-\theta)  s} \, ds\Big)
(\|\dot w\|_{L^2_{-\eta}}^2 + \|\dot z\|_{L^2_{-\eta}}^2)
\\
&\le
C e^{2\eta t}(\|\dot w\|_{L^2_{-\eta}} + \|\dot z\|_{L^2_{-\eta}} ).
\end{aligned}
$$
\end{proof}

\bc\label{Lip1}
Under (A1)--(A3), (H0)--(H2),
for $\|z_1\|_{L^2_{-\eta}}$, $\|z_2\|_{L^2_{-\eta}}$ 
 bounded and $\delta$ sufficiently small, 
solutions $w_1,z_1$ and $w_2,z_2$ of \eqref{diffvcs} exist for all
$t\ge 0$ and, for any constant $\theta > 0$ and some $C=C(\theta)$, 
\begin{equation}\label{LEbounds1}
\begin{aligned}
 \int_0^t e^{-\theta s} & |w_1-w_2|_{H^{1,2}}^2(s) \,ds \le
C|w_{0,1}-w_{0,2}|_{H^1}^2\\
&\quad +
C \int_0^t e^{-\theta s} (|w_1-w_2|_{L^2}^2 + |z_1-z_2|_{L^2}^2) (s)\,ds ;
\end{aligned}
\end{equation}
likewise, for $\theta>2\eta$,
\begin{equation}\label{LEbounds2}
\begin{aligned}
 \int_t^{+\infty} e^{\theta (t-s) } & |w_1-w_2|_{H^{1,2}}^2(s) \,ds \le
C|w_1-w_2|_{H^1}^2(t)\\
&\quad +
C \int_t^{+\infty} e^{\theta (t- s)} 
(|w_1-w_2|_{L^2}^2 + |z_1-z_2|_{L^2}^2) (s)\,ds \\
&
\qquad \qquad \qquad
\quad
\le
Ce^{\theta t}|w_{0,1}-w_{0,2}|_{H^1}^2\\
&\quad 
+C e^{2\eta t}(\|w_1-w_2\|_{L^2_{-\eta}}^2 + \|z_1-z_2\|_{L^2_{-\eta}}^2 ).
\end{aligned}
\end{equation}
\ec

\begin{proof}
Subtracting the equations for $w_1,z_1$ and $w_2,z_2$, we obtain,
denoting $\dot w:=w_1-w_2$, $\dot z:=z_1-z_2$, the equation
\ba\label{vardiffvcs}
\dot w_t- L\dot w=
\Pi_{cs}\big( N^\delta(w_1+z_1) -N^\delta(w_2+z_2) \big)\\
\ea
with initial data $ \dot w_0=w_{0,1}-w_{0,2}$ at $t=0$,
whence the result follows by the Lipshitz bound $\delta$
on $N^\delta:H^{1,2}\to L^2$ and the same argument used
in the proof of Corollary \ref{damp1}.
\end{proof}

\bc\label{Texist}
Under assumptions (A1)--(A3), (H0)--(H2),
for $3\o<\eta<\beta$ and $\delta>0$ and $w_0\in H^2$ sufficiently small,
for each $z\in \cB_{-\eta}$,
there exists a unique solution $w=:\cW(z,w_0)\in \cB_{-\eta}$ 
of \eqref{vcs}, \eqref{diffvcs}, with
\be\label{comp}
\|w\|_{L^2_{-\eta}} \le C(|w_0|_{H^{1,2}}+\delta\|z\|_{-\eta})
\ee
and 
\be\label{Lipcomp}
\|\cW(z_1, w_{0,1})-\cW(z_2, w_{0,2})\|_{L^2_{-\eta}}
\le C\delta \|z_1-z_2\|_{-\eta}
+ C\|w_{0,1}-w_{0,2}\|_{H^{1,2}}.
\ee
\ec

\begin{proof}
We have already shown existence, while uniqueness follows from 
\eqref{Lipcomp}.  Thus, we need only verify the estimates.

Consider a pair of data $z_1, w_{0,1}$ and
$z_2, w_{0,2}$, and compare the resulting solutions,
denoting 
$
(\dot z, \dot w , \dot w_0):=(z_1-z_2,w_1-w_2,w_{0,1}-w_{0,2}).
$
Using the integral representation \eqref{vcs}, and applying
\eqref{bound-pde}(i), \eqref{bound-integrand-lip}(i),
and the definition of $\|\cdot\|_{-\eta}$, we obtain
for all $t\ge 0$ 
\ba\label{tempz}
|\dot w(t)|_{L^2}&\le C e^{\o |t|}|\dot w_{0}|_{L^2}
+ C\delta \int_{0}^t e^{  \o |t-s|} 
 (|\dot w\dot |_{H^2 }+ |\dot z|_{H^2 })(s) \, ds.
\ea

Estimating 
$$
 C\delta \int_{0}^t e^{  \o (t-s)} |\dot z|_{H^2 }(s) \, ds
\le
 C\delta  \|\dot z\|_{-\eta}\int_{0}^t
e^{  \o (t-s)} e^{\eta s} \, ds
\le
 C\delta  \|\dot z\|_{-\eta} e^{\eta t}
$$
and, by \eqref{LEbounds1} with $\theta=3\o$ together with the
Cauchy--Schwarz inequality, 
$$ 
\begin{aligned}
C\delta \int_{0}^t e^{  \o (t-s)} 
 |\dot w\dot |_{H^2 }(s) \, ds &\le 
C\delta \Big(\int_{0}^t e^{  -\o (t-s)} \Big)^{1/2}
\Big(\int_{0}^t e^{3  \o (t-s)} 
 |\dot w\dot |_{H^2 }^2(s) \, ds \Big)^{1/2}\\
 &\le 
C\delta \Big(e^{3\o t}|w_0|_{H^1}^2+
\int_{0}^t e^{ 3 \o (t-s)} 
 (|\dot w\dot |_{L^2 }^2+ |\dot z|_{L^2}^2)(s) \, ds \Big)^{1/2},
\end{aligned}
$$
we obtain, substituting in \eqref{tempz}, 
\ba\label{tempz2}
|\dot w(t)|_{L^2}^2&\le C e^{6\o |t|}|\dot w_{0}|_{H^1}^2
+ C\delta^2 \Big(  \|\dot z\|_{-\eta}^2 e^{2\eta t}
+  \int_{0}^t e^{  3\o (t-s)} 
 (|\dot w\dot |_{L^2 }^2+ |\dot z\dot |_{L^2 }^2) (s) \, ds \Big)\\
&\le
 e^{2\eta t} \Big( C|\dot w_{0}|_{H^1}^2
+ C\delta^2 ( \|\dot w\|_{L^2_{-\eta}}^2 + \|\dot z\|_{-\eta}^2)\Big).
\ea

This yields 
\ba\nonumber
\|\dot w(t)\|_{L^2_{-\eta}}&\le C |\dot w_{0}|_{-\eta}
+ C\delta ( \|\dot w\|_{L^2_{-\eta}} + \|\dot z\|_{-\eta}),
\ea
from which \eqref{Lipcomp} follows by smallness of $\delta$.
The bound \eqref{comp} follows similarly.
\end{proof}

\subsection{Basic existence result}\label{pdecsproof}

\begin{proof}[Proof of Proposition \ref{t:maincs} without
translational independence] 
{\it (i) (Contraction mapping argument)}
We find using \eqref{Lipcomp}, \eqref{bound-integrand-lip}(ii), 
and Lemma \ref{trunc} that
\ba\label{tempz5}
\|\tilde \cT(z_1,w_{0,1})&-\tilde \cT(z_2,w_{0,2})\|_{-\eta}\\
&
\le \sup_t C\delta  e^{-\eta t}\int_t^{+\infty} e^{\beta(t-s)} (|\dot w|_{H^2}+|\dot z|_{H^2})(s) \, ds
\\ &
\le \sup_t C_1 \delta
\Big( \|\dot z\|_{-\eta }
+
e^{-\eta t} \int_t^{+\infty} e^{\beta(t-s)} |\dot w|_{H^2}(s)  ds \Big).
\\
\ea
Using the Cauchy--Schwarz inequality and
\eqref{LEbounds2} with $\theta=\beta$ to estimate
\ba\nonumber
\int_t^{+\infty} e^{\beta(t-s)} |\dot w|_{H^2}(s)  ds &\le
\Big(  \int_t^{+\infty} e^{\beta(t-s)} ds \Big)^{1/2}
\Big(  \int_t^{+\infty} e^{\beta(t-s)} |\dot w|_{H^2}^2(s)  ds \Big)^{1/2}
\\
&\le
C_3\Big(
|\dot w|_{H^1}^2(t) +
 \int_0^t e^{\beta (t-s)} (|\dot w|_{L^2}^2 + |\dot z|_{L^2}^2) (s)ds
\Big)^{1/2}\\
&\le
C_3e^{\eta t}\Big(|\dot w_{0}|_{H^1}
+\|\dot w\|_{L^2_{-\eta}} + \|\dot z\|_{L^2_{-\eta}} \Big)
\\
\ea
and applying \eqref{Lipcomp}, we obtain 
\ba\label{finalLip}
\|\tilde \cT(z_1,w_{0,1})-\tilde \cT(z_2,w_{0,2})\|_{-\eta}&\le
C\delta ( \|\dot w_0\|_{H^{1,2}} + \|\dot z\|_{L^2_{-\eta}}).
\ea
Parallel estimates yield 
\ba\nonumber
\|\tilde T(z,w_0)(t)\|_{-\eta}
&\le
C\delta 
(|w_0|_{H^{1,2}}+\delta\|z\|_{-\eta}),
\ea
so that,
taking $\delta $ and $|w_0|_{H^{1,2}}$ sufficiently small, that
$\tilde \cT(\cdot, w_0)$ maps the ball $B(0,r)\subset \cB_{-\eta}$
to itself, for $r>0$ arbitrarily small but fixed.

This yields at once contractivity on $B(0,r)$,
hence existence of a unique fixed point $z=\cZ(w_0)$, and
Lipshitz continuity of $\cZ$ from $\Sigma_{cs}$ to $\cB_{-\eta}$,
by the Banach Fixed-Point Theorem
with Lipschitz dependence on parameter $w_0$.

{\it (ii) (Existence of a Lipschitz invariant manifold)}
Defining
\ba\label{Phidefc}
\Phi(w_{0})&:= \cZ(w_0)|_{t=0} =
 -\int_{0}^{+\infty}e^{-Ls}\Pi_{u}  N^\delta (v(s))\,ds,
\ea
we obtain a Lipschitz function from $\Sigma_{cs}\to \Sigma_{u}$,
whose graph over $B(0,r)$ is the invariant manifold of solutions 
of \eqref{qpert} growing at exponential rate $|v(t)|\le Ce^{\eta t}$ 
in forward time.  From the latter characterization, we obtain evidently
invariance in forward time.
Since the truncated equations \eqref{qpert} agree with the original
PDE so long as solutions remain small in $H^{1,2}$, this gives local invariance
with respect to \eqref{cons} as well.
By uniqueness of fixed point solutions, we have $\cZ(0)=0$ and
thus $\Phi_{cs}(0)=0$, so that the invariant manifold passes through
the origin.
Likewise, any bounded, sufficiently small solution of \eqref{npert} 
in $H^{1,2}$ is a bounded small solution of \eqref{qpert} as well, so
by uniqueness is contained in the center stable manifold.

{\it (iii) (Quadratic-order tangency)}
By \eqref{Phidefc}, \eqref{bound-pde}, \eqref{Nquad}, 
\eqref{Ebounds2}, amd \eqref{comp},
\ba\nonumber
|\Phi(w_{cs})|_{H^{1,2}} &=
 \Big|\int_{0}^{+\infty}e^{-Ls}\Pi_{u}  N^\delta (w+z)(s)\,ds
\Big|_{H^{1,2}}\\
&\le C\int_0^{+\infty} e^{-\beta s}(|\cW|_{H^{1,2}}^2 +|\cZ|_{H^{1,2}}^2)(s)ds\\
&
\le C_2(|w_{cs}|_{H^{1,2}}^2 +\|\cZ\|_{-\eta}^2).\\
\ea
By $\cZ(0)=0$ and Lipshitz continuity of $\cZ$, we have
$\|\cZ\|_{-\eta}\le C|w_{cs}|_{H^{1,2}}$, whence
\be\label{tanbd}
|\Phi(w_{cs})|_{H^{1,2}}\le C_3 |w_{cs}|_{H^{1,2}}^2,
\ee
verifying quadratic-order tangency at the origin.
\end{proof}

\subsection{Translation-invariance}\label{s:trans}

We conclude by indicating briefly how to recover translation-invariance
of the center stable manifold, following \cite{Z5,TZ1}.
Differentiating with respect to $x$ the traveling-wave ODE, 
we recover the standard fact that $ \phi := \bar U_x $
is an $L^2$ zero eigenfunction of $L$, by the decay of $\bar U_x$
noted in Remark \ref{profdecay}.

Define orthogonal projections
\begin{equation}
\label{proj}
\Pi_2 :=
\frac{ \phi \, \langle \phi, \cdot\rangle}{|\phi|_{L^2}^2}, 
\qquad \Pi_1 := \Id - \Pi_2,
\end{equation}
onto the range of right zero-eigenfunction $\phi:= \bar U_x$
of $L$ and its orthogonal complement $\phi^\perp$ in $L^2$, where 
$\langle \cdot, \cdot \rangle$ denotes standard $L^2$ inner product.

\begin{lemma}\label{Pi}
Under the assumed regularity $h\in C^{k+1}$, $k\ge 2$,
$\Pi_j$, $j=1,2$ are bounded as operators from $H^s$ to itself
for $0\le s \le k+2$.
\end{lemma}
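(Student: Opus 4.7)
The plan is to exploit the rank-one structure of $\Pi_2$ together with the exponential decay of $\phi = \bar U_x$ and its derivatives. Writing $\Pi_2 f = c(f)\,\phi$ with $c(f) = |\phi|_{L^2}^{-2}\langle \phi, f\rangle$, the Cauchy--Schwarz inequality bounds $|c(f)| \le |\phi|_{L^2}^{-1}|f|_{L^2}$, so the boundedness of $\Pi_2$ on $H^s$ reduces to the assertion that $\phi \in H^s$ with a uniform constant.

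First I would verify that $\phi \in H^s$ for all $0 \le s \le k+2$. By Remark \ref{laxrmk}, every derivative of $\bar U - U_\pm$ up to order $k+1$ decays exponentially in $L^\infty$, and the assumed extra regularity of the coefficients lets one bootstrap the traveling-wave ODE \eqref{ode} (differentiating and using the block structure of $B$ to solve algebraically for higher derivatives of $\bar U_1$ and by $C^{k+1}$ ODE theory for $\bar U_2$) to gain the additional orders needed. In particular, $\partial_x^j\phi = \partial_x^{j+1}\bar U$ lies in $L^2$ (in fact in $L^\infty$ with exponential decay) for $0 \le j \le k+2$, giving $\phi \in H^{k+2}$.

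Combining these ingredients, for any $f \in H^s$ with $0 \le s \le k+2$,
\[
|\Pi_2 f|_{H^s} = \frac{|\langle \phi, f\rangle|}{|\phi|_{L^2}^2}\,|\phi|_{H^s} \le \frac{|\phi|_{H^s}}{|\phi|_{L^2}}\,|f|_{L^2} \le C\,|f|_{H^s}.
\]
The bound on $\Pi_1 = \Id - \Pi_2$ then follows immediately by the triangle inequality, $|\Pi_1 f|_{H^s} \le |f|_{H^s} + |\Pi_2 f|_{H^s} \le (1+C)|f|_{H^s}$.

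There is no genuine obstacle here; the only step of any substance is the regularity/decay bookkeeping for $\phi$, which is a direct bootstrap from \eqref{profdecay} using the traveling-wave ODE. The lemma is a preliminary technical observation set up for the translation-invariance argument that follows in Section \ref{s:trans}, where $\Pi_1,\Pi_2$ will be used to decompose the perturbation into a component along the neutral translational direction $\phi$ and a transverse component on which the prior fixed-point construction can be re-run.
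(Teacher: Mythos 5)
Your proof is correct and matches the paper's one-line argument ("Immediate, by the assumed decay of $\phi=\bar u_x$ and derivatives"): both reduce the claim to $\phi\in H^s$ via the rank-one structure of $\Pi_2$, with the $\Pi_1$ bound following trivially from $\Pi_1=\Id-\Pi_2$. You have simply filled in the routine details (Cauchy--Schwarz for the coefficient, the ODE bootstrap for the needed derivatives of $\bar U$, invoking \eqref{profdecay}), which the paper suppresses.
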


\begin{proof}
Immediate, by the assumed decay of $\phi=\bar u_x$ and derivatives.
\end{proof}

\begin{proof}[Proof of Proposition \ref{t:maincs}, with
translational independence] 
Introducing the shifted perturbation variable
\be\label{shift}
V(x,t):= U(x+\alpha(t),t)-\bar U(x)
\ee
we obtain the modified nonlinear perturbation equation
\begin{equation}
\label{epspert}
\d_t V= LV + \cN(V)_x -\d_t \alpha (\phi+ \partial_x V),
\end{equation}
where $ L := \frac{\d {\cal F}}{\d U}( \bar U)$ and
$\cN$ as in \eqref{N1} is a quadratic-order Taylor remainder.

Choosing $\d_t\alpha$ so as to cancel $\Pi_2$ of the righthand
side of \eqref{epspert}, we obtain the {\it reduced equations}
\be\label{redv}
\d_t V= \Pi_1( LV + N(V))
\ee
and
\be\label{alphaeq}
\d_t \alpha =
\frac{\pi_2(LV + N(V))}
{1+ \pi_2 (\d_x V)}
\ee
$V\in \phi^\perp$, 
$\pi_2 V:= \langle \tilde \phi, V\rangle |\phi|_{L^2}^2$,
of the same regularity as the original equations.

$V(0)\in \phi^\perp$, or
$$
\langle \phi, U_0(x+\alpha)-\bar U(x)\rangle=0.
$$
Assuming that $U_0$ lies in a sufficiently small tube about
the set of translates of $\bar U$, or $U_0(x)=(\bar U+W)(x-\beta)$
with $|W|_{H^{1,2}}$ sufficiently small, this can be done in a unique
way such that $\tilde \alpha:=\alpha-\beta$ is small, as determined
implicitly by 
$$
0=\cG(W,\tilde \alpha):=
\langle \phi, \bar U(\cdot+\tilde \alpha)-\bar U(\cdot)\rangle
+\langle \phi, W(\cdot+\alpha) \rangle,
$$
an application of the Implicit Function Theorem noting that
$$
\d_{\tilde \alpha}\cG(0,0)=
 \langle \phi, \partial_x \bar U(\cdot)\rangle=|\phi|_{L^2}^2\ne0.
$$
With this choice, translation invariance under our construction is clear,
with translation corresponding to a constant shift in $\alpha$
that is preserved for all time.

Clearly, \eqref{alphaeq} is well-defined so long as 
$|\d_x V|_{L^2}\le C|V|_{H^{1,2}}$
remains small, hence we may solve the $v$ equation independently
of $\alpha$, determining $\alpha$-behavior afterward to determine
the full solution 
$$
U(x,t)= \bar U(x-\alpha(t))+V(x-\alpha(t),t).
$$
Moreover, it is easily seen (by the block-triangular structure
of $L$ with respect to this decomposition) that
the linear part $\Pi_1L=\Pi_1 L\Pi_1$ of the $v$-equation possesses
all spectrum of $L$ apart from the zero eigenvalue associated with 
eigenfunction $\phi$.
Thus, we have effectively projected out this zero-eigenfunction, and
with it the group symmetry of translation.

We may therefore construct the center stable manifold for the reduced
equation \eqref{redv}, automatically obtaining translation-invariance
when we extend to the full evolution using \eqref{alphaeq}.
See \cite{TZ1,Z5} for further details.
\end{proof}


\section{Conditional stability analysis}\label{s:cond}

Define similarly as in Section \ref{s:trans} the perturbation variable
\be\label{pert2}
V(x,t):=U(x+\alpha(t),t)-\bar U(x)
\ee
for $U$ a solution of \eqref{cons}, where $\alpha$ is to be specified later.
Subtracting the equations for $U(x+\alpha(t), t)$ and $\bar U(x)$,
we obtain the nonlinear perturbation equation
\be\label{nlpert}
V_t-LV= \cN(V)_x - \dot \alpha(\bar U_x + V_x),
\ee
where $L:= -A\partial_x +\partial_x B \partial_x$ as in \eqref{L}
denotes the linearized operator about $\bar U$ and
$\cN$ as in \eqref{N1} is a nonlinear residual, satisfying
\ba\label{Nbds}
\cN(V)&=O(|V|^2+|V||V_x|).\\
\partial_x \cN(V)&=O(|V||\partial_x V| + |V_x|^2+|V||V_{xx}|)\\
\ea
so long as $|V|_{H^1}$ (hence $|V|_{L^\infty}$ and $|U|_{L^\infty}$) 
remains bounded. 

\subsection{Projector bounds}\label{projbds}
Let $\Pi_u$ denote the eigenprojection of $L$ onto its
unstable subspace $\Sigma_u$, and $\Pi_{cs}={\rm Id}- \Pi_u$
the eigenprojection onto its center stable subspace $\Sigma_{cs}$. 

\bl\label{projlem}
Assuming (H0)--(H1),
\be\label{comm}
\Pi_j \partial_x= \partial_x \tilde \Pi_j
\ee
for $j=u,\, cs$ and, for all $1\le p\le \infty$, $0\le r\le 4$, 
\ba
|\Pi_{u}|_{W^{r,p}\to W^{r,p}}, |\tilde \Pi_{u}|_{W^{r,p}\to W^{r,p}}&\le C,\\
|\tilde \Pi_{cs}|_{W{r,p}\to W^{r,p} }, 
\; |\tilde \Pi_{cs}|_{W{r,p}\to W^{r,p}}&\le C.
\ea
\el

\begin{proof}
Recalling (see the proof of Theorem \ref{t:maincs})
that $L$ has at most finitely many unstable eigenvalues,
we find that $\Pi_u$ may be expressed as
$$
\Pi_u f= \sum_{j=1}^p \phi_j(x) \langle \tilde \phi_j, f\rangle,
$$
where $\phi_j$, $j=1, \dots p$ are generalized right eigenfunctions of
$L$ associated with unstable eigenvalues $\lambda_j$, 
satisfying the generalized eigenvalue equation $(L-\lambda_j)^{r_j}\phi_j=0$,
$r_j\ge 1$, and $\tilde \phi_j$
are generalized left eigenfunctions.
Noting that $L$ is divergence form, and that $\lambda_j\ne 0$,
we may integrate $(L-\lambda_j)^{r_j}\phi_j=0$ over $\R$ to
obtain $\lambda_j^{r_j}\int \phi_j dx=0$ and thus $\int\phi_j dx=0$.
Noting that $\phi_j$, $\tilde \phi_j$ and derivatives decay exponentially
by standard theory \cite{He,ZH,MaZ1}, we find that
$$
\phi_j= \partial_x \Phi_j
$$
with $\Phi_j$ and derivatives exponentially decaying, hence
$$
\tilde \Pi_u f=\sum_j \Phi_j \langle \partial_x \tilde \phi, f\rangle.
$$
Estimating 
$$
|\partial_x^j\Pi_u f|_{L^p}=|\sum_j \partial_x^j\phi_j \langle \tilde \phi_j f
\rangle|_{L^p}\le
\sum_j |\partial_x^j\phi_j|_{L^p} |\tilde \phi_j|_{L^q} |f|_{L^p}
\le C|f|_{L^p}
$$
for $1/p+1/q=1$
and similarly for $\partial_x^r \tilde \Pi_u f$, we obtain the claimed
bounds on $\Pi_u$ and $\tilde \Pi_u$, from which the bounds on
$\Pi_{cs}={\rm Id}-\Pi_u$ and
$\tilde \Pi_{cs}={\rm Id}-\tilde \Pi_u$ follow immediately.
\end{proof}

\subsection{Linear estimates}

Let $G_{cs}(x,t;y):=\Pi_{cs}e^{Lt}\delta_y(x)$ denote
the Green kernel of the linearized solution operator on
the center stable subspace $\Sigma_{cs}$.
Then, we have the following detailed pointwise bounds
established in \cite{TZ2,MaZ1}.

\begin{proposition}[\cite{TZ2,MaZ1}]\label{greenbounds}
Under (A0)--(A3), (H0)--(H3), (D1)--(D3), 
the center stable Green function may be decomposed as 
$G_{cs}=E+ H + \tilde G,$
where 
\begin{equation}\label{E}
E(x,t;y)= \d_x \bar U(x) e_j(y,t),
\end{equation}
\begin{equation}\label{e}
  e(y,t)=\sum_{a_k^{-}>0}
  \left(\textrm{errfn }\left(\frac{y+a_k^{-}t}{\sqrt{4t}}\right)
  -\textrm{errfn }\left(\frac{y-a_k^{-}t}{\sqrt{4t}}\right)\right)
  l_{k}^{-}(y)
\end{equation}
for $y\le 0$ and symmetrically for $y\ge 0$, 
$l_k^-\in \R^n$ constant, 
\be\label{H}
H(x,t;y)=
\sum_{j=1}^{J} 
{O}(e^{-\theta_0 t}) \delta_{x-\bar a_j^* t}(-y) ,
\quad \theta>0,
\ee
and
\begin{equation}\label{Gbounds}
\begin{aligned}
|\tilde G(x,t;y)|&\le  Ce^{-\eta(|x-y|+t)} +
\sum_{k=1}^n t^{-1/2}e^{-(x-y-a_k^{-} t)^2/Mt} e^{-\eta x^+} \\
&+
\sum_{a_k^{-} > 0, \, a_j^{-} < 0} 
\chi_{\{ |a_k^{-} t|\ge |y| \}}
t^{-1/2} e^{-(x-a_j^{-}(t-|y/a_k^{-}|))^2/Mt}
e^{-\eta x^+} \\
&+
\sum_{a_k^{-} > 0, \, a_j^{+}> 0} 
\chi_{\{ |a_k^{-} t|\ge |y| \}}
t^{-1/2} e^{-(x-a_j^{+} (t-|y/a_k^{-}|))^2/Mt}
e^{-\eta x^-}, \\
\end{aligned}
\end{equation}
\begin{equation}\label{Gybounds}
\begin{aligned}
|\partial_y \tilde G(x,t;y)|&\le  Ce^{-\eta(|x-y|+t)}
+ Ct^{-1/2}\Big( \sum_{k=1}^n 
t^{-1/2}e^{-(x-y-a_k^{-} t)^2/Mt} e^{-\eta x^+} \\
&+
\sum_{a_k^{-} > 0, \, a_j^{-} < 0} 
\chi_{\{ |a_k^{-} t|\ge |y| \}}
t^{-1/2} e^{-(x-a_j^{-}(t-|y/a_k^{-}|))^2/Mt}
e^{-\eta x^+} \\
&+
\sum_{a_k^{-} > 0, \, a_j^{+}> 0} 
\chi_{\{ |a_k^{-} t|\ge |y| \}}
t^{-1/2} e^{-(x-a_j^{+} (t-|y/a_k^{-}|))^2/Mt}
e^{-\eta x^-}\Big) \\
\end{aligned}
\end{equation}
for $y\le 0$ and symmetrically for $y\ge 0$,
for some $\eta$, $C$, $M>0$, where 
$a_j^\pm$ are the eigenvalues of $A_\pm=dF(U_\pm)$, 
$a_j^*$, $j=1,\dots, J$ are the eigenvalues of $A_{11}$,
$x^\pm$ denotes the positive/negative
part of $x$, and  indicator function $\chi_{\{ |a_k^{-}t|\ge |y| \}}$ is 
$1$ for $|a_k^{-}t|\ge |y|$ and $0$ otherwise.
\end{proposition}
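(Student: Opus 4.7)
The plan is to reduce the claim to the Green function bounds for the full linearized operator $e^{Lt}$ already established in \cite{MaZ1,TZ2}, which directly cover the spectrally stable case $p=0$, and then to observe that the presence of finitely many unstable eigenvalues $\lambda_1,\dots,\lambda_p$ of $L$ causes no essential change once one works with the center--stable projection $\Pi_{cs}$.

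First I would represent the center--stable Green kernel by the Dunford/inverse--Laplace formula
\[
G_{cs}(x,t;y)=\frac{1}{2\pi i}\int_{\Gamma}e^{\lambda t}(\lambda-L)^{-1}(x,y)\,d\lambda,
\]
with contour $\Gamma$ chosen as in \cite{MaZ1,TZ2} to enclose the essential spectrum and the embedded translational eigenvalue $\lambda=0$ but to exclude the unstable eigenvalues $\lambda_j$, $j=1,\dots,p$. Such a $\Gamma$ exists because, as noted in the proof of Proposition \ref{linest}, the $\lambda_j$ are isolated and of finite multiplicity, and by (D1) there are no other pure imaginary eigenvalues. The resolvent kernel $(\lambda-L)^{-1}(x,y)$ is then analyzed via the Evans function $D(\lambda)$ built from decaying solutions of $(L-\lambda)\phi=0$ as in \cite{ZH,MaZ1}: assumptions (D1)--(D3) guarantee that $D$ has a simple zero (in the generalized sense) at $\lambda=0$ corresponding to translation by $\bar U_x$, and no further zeros on $\{\Re\lambda\ge 0\}\setminus\{\lambda_1,\dots,\lambda_p\}$.

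Next I would deform $\Gamma$ into $\{\Re\lambda<0\}$ and split the resulting contour integral in the standard low--frequency/bounded--frequency/high--frequency fashion of \cite{MaZ1,ZH,TZ2}. The contribution of the simple pole of $D^{-1}$ at $\lambda=0$ produces the excited term $E=\partial_x \bar U(x)\,e(y,t)$ with the explicit error--function expression \eqref{e} coming from the low--frequency expansion of the scattering coefficients of the profile eigenvalue ODE; the outgoing characteristics of $A_{11}$, which by (A1), (H1) have constant multiplicity and are uniformly transverse to $s=0$, produce the moving--delta term $H$ with exponential decay rate $\theta_0$, as sharpened in \cite{TZ2} using the strong block structure (A1); and saddle--point/stationary--phase estimates on the remaining low-- and intermediate--frequency pieces, together with the parabolic--type resolvent bounds of Proposition \ref{linest} for the high--frequency piece, yield $\tilde G$ together with its $y$--derivative bound \eqref{Gybounds}, with the various Gaussian kernels in \eqref{Gbounds} corresponding to the convection speeds $a_k^{-}, a_j^{\pm}$ of $A_{\pm}$ and to reflected/transmitted modes at the shock layer. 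No residue contributions from the $\lambda_j$ arise because they lie strictly outside $\Gamma$.

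The technically hardest part, and the reason the statement is quoted rather than redone here, is the detailed Evans--function/resolvent construction underlying the low--frequency expansion -- in particular the tracking of reflected and transmitted modes at the shock layer that produces the mixed Gaussian kernels of \eqref{Gbounds} and the sharper $y$--derivative bounds of \eqref{Gybounds}. In the present conditionally stable setting this core is unchanged from \cite{MaZ1,TZ2}; the only new ingredient is the choice of $\Gamma$ around the unstable eigenvalues, which is immediate from their isolation and finite multiplicity.
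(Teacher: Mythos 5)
Your proposal follows essentially the same route as the paper: an inverse-Laplace/Dunford representation, a contour chosen so that the unstable eigenvalues $\lambda_1,\dots,\lambda_p$ are separated from the rest of $\sigma(L)$, and then a citation of the Evans-function/minimax machinery of \cite{MaZ1,ZH,TZ2} for the detailed low-, mid-, and high-frequency estimates that produce $E$, $H$, and $\tilde G$. The only cosmetic difference is bookkeeping: the paper starts with the full Green function $G=e^{Lt}\delta_y$, moves the sectorial contour $\Gamma$ inward to $\Gamma'$, picking up residues at the $\lambda_j$ that are identified with $G_u=\Pi_u e^{Lt}\delta_y$, and thereby reduces to the known full decomposition $G=G_u+E+H+\tilde G$; you instead define $\Gamma$ to exclude the $\lambda_j$ from the outset and so never see $G_u$. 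These are the same argument viewed from two sides, and both legitimately defer the genuinely hard resolvent/Evans-function analysis (including the reflected/transmitted mode bookkeeping underlying \eqref{Gbounds}--\eqref{Gybounds} and the origin of the hyperbolic term $H$) to the cited references.

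One small caveat worth flagging: you attribute the high-frequency piece to ``parabolic-type resolvent bounds of Proposition~\ref{linest}.'' In the partially parabolic setting the high-frequency contribution is not purely dissipative; it is exactly the source of the transported delta term $H$ along the hyperbolic characteristics of $A_{11}$, so the high-frequency analysis is hyperbolic--parabolic rather than parabolic. You do say $H$ comes from the outgoing $A_{11}$ characteristics, which is right, but the phrase ``parabolic--type resolvent bounds'' for the residual high-frequency estimate is not accurate for this class of systems and would need to be replaced by the hyperbolic--parabolic high-frequency analysis of \cite{MaZ1,MaZ3}.
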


\begin{proof}
As observed in \cite{TZ2},
it is equivalent to establish decomposition 
\be\label{fulldecomp}
G=G_u + E+ H+ \tilde G
\ee
for the full Green function $G(x,t;y):=e^{Lt}\delta_y(x)$,
where 
$$
G_u(x,t;y):=\Pi_u e^{Lt}\delta_y(x)
=
e^{\gamma t}\sum_{j=1}^p\phi_j(x)\tilde \phi_j(y)^t
$$
for some constant matrix $M\in \C^{p\times p}$
denotes the Green kernel of the linearized solution operator
on $\Sigma_u$, $\phi_j$ and $\tilde\phi_j$ right and left
generalized eigenfunctions associated with unstable eigenvalues
$\lambda_j$, $j=1,\dots,p$.

The problem of describing the full Green function
has been treated in \cite{ZH, MaZ3}, 
starting with the Inverse Laplace Transform representation
\be\label{ILT2}
G(x,t;y)=e^{Lt}\delta_y(x)= \oint_\Gamma e^{\lambda t}(\lambda-L(\e))^{-1} 
\delta_y(x)d\lambda \, ,
\ee
where 
$$
\Gamma:= \partial \{ \lambda : \Re \lambda\le \eta_1 - \eta_2 |\Im \lambda|\}
$$
is an appropriate sectorial contour, $\eta_1$, $\eta_2>0$;
estimating the resolvent kernel 
$G^\eps_\lambda(x,y):=(\lambda-L(\e))^{-1}\delta_y(x)$
using Taylor expansion in $\lambda$,
asymptotic ODE techniques in $x$, $y$, and judicious decomposition
into various scattering, excited, and residual modes;
then, finally, estimating the contribution of various modes to \eqref{ILT2}
by Riemann saddlepoint (Stationary Phase) method, moving contour
$\Gamma$ to a optimal, ``minimax'' positions for each
mode, depending on the values of $(x,y,t)$.

In the present case, we may first move $\Gamma$ to a contour
$\Gamma'$ enclosing (to the left) all spectra of $L$
except for the $p$ unstable eigenvalues $\lambda_j$, $j=1, \dots, p$,
to obtain
$$
G(x,t;y)= \oint_{\Gamma'} e^{\lambda t}(\lambda-L)^{-1} d\lambda
+ \sum_{j=\pm} 
\Res_{\lambda_j(\eps)} \big( e^{\lambda t}(\lambda-L)^{-1}
\delta_y(x) \big),
$$
where
$\Res_{\lambda_j(\eps)} \big( e^{\lambda t}(\lambda-L)^{-1}
\delta_y(x) \big) = G_u(x,t;y)$, then estimate the remaining term
$ \oint_{\Gamma'} e^{\lambda t}(\lambda-L)^{-1} d\lambda$
on minimax contours as just described.
See the proof of Proposition 7.1, \cite{MaZ3}, for a detailed 
discussion of minimax estimates $E+G$ and of Proposition 7.7, 
\cite{MaZ3} for a complementary discussion of residues
incurred at eigenvalues in $\{\Re \lambda\ge 0\}\setminus\{0\}$.
See also \cite{TZ1}.
\end{proof}

\bc [\cite{MaZ1}] \label{lpbds}
Assuming (A1)--(A3), (H0)--(H3), (D1)--(D3),
\be \label{Hbounds}
|\int_{-\infty}^{+\infty} H(\cdot,t;y)f(y)dy|_{L^p}
\le C e^{-\theta t} |f|_{L^p},
\ee
\be \label{tGbounds}
|\int_{-\infty}^{+\infty} \tG(\cdot,t;y)f(y)dy|_{L^p}
\le C (1+t)^{-\frac{1}{2}(\frac{1}{q}-\frac{1}{p})} |f|_{L^q},
\ee
\be\label{tGybounds}
|\int_{-\infty}^{+\infty} \tG_y(\cdot,t;y)f(y)dy|_{L^p}
\le C (1+t)^{-\frac{1}{2}(\frac{1}{q}-\frac{1}{p})-\frac{1}{2}} |f|_{L^q},
\ee
for all $t\ge 0$, some $C>0$, for any
$1\le q\le p$ (equivalently, $1\le r\le p$)
and $f\in L^q$, where $1/r+1/q=1+1/p$.
\ec

\begin{proof}
Standard convolution inequalities together
with bounds \eqref{H}--\eqref{Gybounds}; see \cite{MaZ1,MaZ2,MaZ3,Z2} 
for further details.
\end{proof}

\bc[\cite{Z4}]\label{ebds}
The kernel ${e}$ satisfies
$$
|{e}_y (\cdot, t)|_{L^p},  |{e}_t(\cdot, t)|_{L^p} 
\le C t^{-\frac{1}{2}(1-1/p)},
\label{36}
$$
$$
|{e}_{ty}(\cdot, t)|_{L^p} 
\le C t^{-\frac{1}{2}(1-1/p)-1/2},
\label{37}
$$
for all $t>0$.  
Moreover, for $y\le 0$ we have the pointwise bounds
$$
|{e}_y (y,t)|, |{e}_t (y,t)| 
\le 
Ct^{-\frac{1}{2}} \sum_{a_k^->0}
\Big(e^{-\frac{(y+a_k^-t)^2}{Mt}}+
e^{-\frac{(y-a_k^-t)^2}{Mt}}\Big),
\label{38}
$$
$$
|{e}_{ty} (y,t)| \le 
Ct^{-1} \sum_{a_k^->0}
\Big(e^{-\frac{(y+a_k^-t)^2}{Mt}}+
e^{-\frac{(y-a_k^-t)^2}{Mt}}\Big),
\label{39}
$$
for $M>0$ sufficiently large, 
and symmetrically for $y\ge 0$.
\ec

\begin{proof}
Direct computation using definition \eqref{e}; 
see \cite{Z4,MaZ2,MaZ3} or \cite{Z5}, Appendix A.
\end{proof}


\subsection{Reduced equations II}
Recalling that $\d_x\bar U$ is a stationary
solution of the linearized equations $U_t=LU$,
so that $L\d_x\bar U =0$, or
$$
\int^\infty_{-\infty}G(x,t;y)\bar U_x(y)dy=e^{Lt}\bar U_x(x)
=\d_x\bar U(x),
$$
we have, applying Duhamel's principle to \eqref{nlpert},
$$
\begin{array}{l}
  \displaystyle{
  V(x,t)=\int^\infty_{-\infty}G(x,t;y)V_0(y)\,dy } \\
  \displaystyle{\qquad
  -\int^t_0 \int^\infty_{-\infty} G_y(x,t-s;y)
  (\cN(V)+\dot \alpha V ) (y,s)\,dy\,ds + \alpha (t)\d_x \bar u(x).}
\end{array}
$$
Defining 
\begin{equation}
 \begin{array}{l}
  \displaystyle{
  \alpha (t)=-\int^\infty_{-\infty}e(y,t) V_0(y)\,dy }\\
  \displaystyle{\qquad
  +\int^t_0\int^{+\infty}_{-\infty} e_{y}(y,t-s)(\cN(V)+
  \dot \alpha\, V)(y,s) dy ds, }
  \end{array}
 \label{alpha}
\end{equation}
following \cite{ZH,Z4,MaZ2,MaZ3}, 
where $e$ is as in \eqref{e}, 
and recalling the decomposition $G=E+ H+ G_u+ \tilde G$ of \eqref{fulldecomp},
we obtain the {\it reduced equations}
\ba \label{v}
  V(x,t)&=\int^\infty_{-\infty} (G_u+H+\tilde G)(x,t;y)V_0(y)\,dy \\
  &\quad -\int^t_0\int^\infty_{-\infty}(G_u+\tilde G)_y(x,t-s;y)(\cN(V)+
  \dot \alpha V)(y,s) dy \, ds \\
  &\quad +\int^t_0\int^\infty_{-\infty}H(x,t-s;y)(\cN(V)+
  \dot \alpha V)_y(y,s) dy \, ds, \\
\ea
and, differentiating (\ref{alpha}) with respect to $t$,
and observing that 
$e_y (y,s)\rightharpoondown 0$ as $s \to 0$, as the difference of 
approaching heat kernels,
\begin{equation}
 \begin{array}{l}
 \displaystyle{
  \dot \alpha (t)=-\int^\infty_{-\infty}e_t(y,t) V_0(y)\,dy }\\
 \displaystyle{\qquad
  +\int^t_0\int^{+\infty}_{-\infty} e_{yt}(y,t-s)(\cN(V)+
  \dot \alpha V)(y,s)\,dy\,ds. }
 \end{array}
\label{alphadot}
\end{equation}
\medskip

Note that this (nonlocal in time) choice of $\alpha$ 
and the resulting reduced equations are different from those
of Section \ref{s:trans}.
As discussed further in \cite{Go,Z4,MaZ2,MaZ3,Z2},
$\alpha$ may be considered in the present context as defining a notion of
approximate shock location.

\subsection{Nonlinear damping estimate}

\begin{proposition}[\cite{MaZ3}]\label{damping}
Assuming (A0)-(A3), (H0)-(H2), let $V_0\in H^{3}$, 
and suppose that for $0\le t\le T$, the $H^{3}$ norm of $V$
remains bounded by a sufficiently small constant, for $V$ as in
\eqref{pert2} and $u$ a solution of \eqref{cons}.
Then, for some constants $\theta_{1,2}>0$, for all $0\leq t\leq T$,
\begin{equation}\label{nlEbounds}
\|V(t)\|_{H^3}^2 \leq C e^{-\theta_1 t} \|V(0)\|^2_{H^3} 
+ C \int_0^t e^{-\theta_2(t-s)} (|V|_{L^2}^2 + |\dot \alpha|^2) (s)\,ds.
\end{equation}
\end{proposition}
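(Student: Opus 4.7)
The plan is to upgrade the $H^1$-level Friedrichs--Kawashima estimate of Proposition \ref{localdamp} to an $H^3$-level estimate by building a composite weighted energy functional and differentiating it along \eqref{nlpert}. The essential inputs are the strong block structure (A1) (in particular the linearity of $F_1$ in $U$), the symmetrizer $A^0$ from (A2), the Kawashima compensator $K(x)$ produced from (A3), the weight $\alpha(x)$ of Proposition \ref{localdamp}, and the exponential decay \eqref{profdecay} of the profile and its derivatives.

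Differentiating \eqref{nlpert} $r$ times in $x$ for $r = 0, 1, 2$ and forming
$$\cE_*(V) := \sum_{r=0}^{2} M^{2(2-r)}\, \cE(\partial_x^r V),$$
with $\cE$ the quadratic form of Proposition \ref{localdamp} and $M$ a large constant chosen so that lower-order contributions cannot swamp higher-order ones, one obtains by the equivalence $\cE(\cdot) \sim |\cdot|_{H^1}^2$ a functional equivalent to $\|V\|_{H^3}^2$. At each level $r$, the linear part of the equation produces, via Proposition \ref{localdamp}, a coercive dissipation $-\theta |\partial_x^r V|_{H^{1,2}}^2$; the weighted sum then yields dissipation bounded below by $\theta_0 \cE_*(V)$. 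Commutators $[\partial_x^r, L]$ are of strictly lower order and carry derivatives of $\bar U$; by \eqref{profdecay} and Young's inequality they split into a contribution absorbed in the dissipation (whenever the surviving factor contains a parabolic derivative of $V_2$) and an $O(|V|_{L^2}^2)$ residual.

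The nonlinearity $\cN(V)_x$ and the translation forcing $-\dot\alpha(\bar U_x + V_x)$ are treated next. Using Moser's inequality, the embedding $H^1 \hookrightarrow L^\infty$, and the standing smallness hypothesis $\|V\|_{H^3}\ll 1$, the contribution of $\cN(V)_x$ to $\partial_t \cE_*$ is bounded by $C\|V\|_{H^3}\, \cE_*(V)$ and absorbed into the dissipation; the term $-\dot\alpha\, V_x$ is absorbed similarly, while $-\dot\alpha\, \bar U_x$ splits by Cauchy--Schwarz into $C|\dot\alpha|^2 + (\theta_0/2)\, \cE_*(V)$ using pointwise decay of $\partial_x^r \bar U$. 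After absorption one arrives at
$$\partial_t \cE_*(V) \le -\theta_2\, \cE_*(V) + C(|V|_{L^2}^2 + |\dot\alpha|^2),$$
whereupon Gronwall's inequality together with the equivalence $\cE_* \sim \|V\|_{H^3}^2$ produces \eqref{nlEbounds} with $\theta_1 = \theta_2$.

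The main obstacle is closing the estimate at the top level $r = 2$ on the hyperbolic block $V_1$, which receives no direct parabolic smoothing from $B$: one must exploit that (A1) makes $F_1$ linear in $U$, so that no top-order commutator generated from the nonlinearity lands on $\partial_x^3 V_1$ itself, together with the weighted contribution $-\theta C_* \langle \partial_x^3 V_1, |\bar U_x|\alpha\, \partial_x^3 V_1\rangle$ produced in the Proposition \ref{localdamp} construction, in order to close the estimate without any loss of regularity on the $V_1$-component.
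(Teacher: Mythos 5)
Your proposal is correct and takes essentially the same route the paper indicates: it upgrades the weighted $H^1$ Kawashima--Friedrichs estimate of Proposition \ref{localdamp} to the $H^3$ level via a hierarchy of differentiated energies, bounds the nonlinear and $\dot\alpha$ forcing by Moser's inequality and the standing smallness of $\|V\|_{H^3}$, and closes with Gronwall --- which is exactly what the paper's one-line proof (``essentially identical to the proof of Proposition \ref{localdamp} and corollaries, but using Moser's inequality \dots'') and its references to \cite{MaZ4,Z2,Z3} point to. One small clarification worth making: in the hyperbolic block $V_1$ the actual $H^1$ dissipation at each level $r$ comes from the Kawashima compensator $K$ (the term $-C\langle \partial_x w, \Re(KA+A^0B)\alpha\,\partial_x w\rangle$ in the Proposition \ref{localdamp} ledger); the weighted term $-\theta C_*\langle\partial_x^{r+1} V_1, |\bar U_x|\alpha\,\partial_x^{r+1} V_1\rangle$ that you emphasize is there to absorb the nonsymmetric convective remainder generated by $\partial_x(A^0 A)_{11}$, not to supply the coercivity itself. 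Also note that (A1) together with the block form of $B$ makes $\cN_1 \equiv 0$, which is a slightly stronger fact than ``no top-order commutator lands on $\partial_x^3 V_1$'' and is the real reason the $V_1$ block closes without loss.
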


\begin{proof}
Essentially identical to the proof of Proposition \ref{localdamp}
and corollaries, but using Moser's inequality to bound the nonlinear
term instead of the Lipshitz bound imposed by truncation.
See \cite{MaZ4,Z2,Z3} for detailed proofs of more general results.
\end{proof}

\subsection{Proof of nonlinear stability}

Decompose now the nonlinear perturbation $V$ as
\be\label{vdecomp}
V(x,t)=w(x,t)+z(x,t),
\ee
where
\be\label{wzdef}
w:=\Pi_{cs}V, \quad z:=\Pi_u V.
\ee
Applying $\Pi_{cs}$ to \eqref{v} and recalling commutator
relation \eqref{comm}, we obtain an equation
\ba \label{w}
  w(x,t)&=\int^\infty_{-\infty} \tilde G (x,t;y)w_0(y)\,dy \\
  &\quad -\int^t_0\int^\infty_{-\infty} \tilde G_y (x,t-s;y)
\tilde \Pi_{cs}(\cN(V)+
  \dot \alpha V)(y,s) dy \, ds\\
  &\quad +\int^t_0\int^\infty_{-\infty} H (x,t-s;y)
\Pi_{cs} (\cN(V)+
  \dot \alpha V)_y (y,s) dy \, ds
\ea
for the flow along the center stable manifold, parametrized by
$w\in \Sigma_{cs}$.

\bl\label{quadlem} Assuming (A1)--(A3), (H0)--(H2), for $V$ lying initially
on the center stable manifold $\cM_{cs}$,
\be\label{vwbd}
|z|_{W^{r,p}}\le C|w|_{H^2}^2
\ee 
for some $C>0$, for all $1\le p\le \infty$ and $0\le r\le 4$,
so long as $|w|_{H^2}$ remains sufficiently small.
\el

\begin{proof}
By \eqref{tanbd1}, we
have immediately $|z|_{H^{1,2}}\le C|w|_{H^{1,2}}^2$, whence
\eqref{vwbd} follows by equivalence of norms for finite-dimensional
vector spaces, applied to the $p$-dimensional subspace $\Sigma_u$.
(Alternatively, we may see this by direct computation using
the explicit description of $\Pi_u V$ afforded by Lemma \ref{projlem}.)
\end{proof}

\begin{proof}[Proof of Theorem \ref{t:mainstab}]

Recalling by Theorem \ref{t:maincs}
that solutions remaining for all time in a sufficiently
small radius neighborhood $\cN$ of the set of translates of $\bar u$
lie in the center stable manifold $\cM_{cs}$, we obtain trivially
that solutions not originating in $\cM_{cs}$ must exit $\cN$ in finite time,
verifying the final assertion of orbital instability with respect
to perturbations not in $\cM_{cs}$.

Consider now a solution $V \in \cM_{cs}$, or, equivalently,
a solution $w\in \Sigma_{cs}$ of \eqref{w} with 
$z=\Phi_{cs}(w)\in \Sigma_u$.
Define
\begin{equation}
\label{zeta2}
 \zeta(t):= \sup_{0\le s \le t}
 \Big( |w|_{H^3}(1+s)^{\frac{1}{4}} + 
(|w|_{L^\infty}+ |\dot \alpha (s)|)(1+s)^{\frac{1}{2}} \Big).
\end{equation}
We shall establish:

{\it Claim.} For all $t\ge 0$ for which a solution exists with
$\zeta$ uniformly bounded by some fixed, sufficiently small constant,
there holds
\begin{equation}
\label{claim}
\zeta(t) \leq C_2(E_0 + \zeta(t)^2)
\quad \hbox{\rm for} \quad
E_0:=|V_0|_{L^1\cap H^3}.
\end{equation}
\medskip

{}From this result, provided $E_0 < 1/4C_2^2$, 
we have that $\zeta(t)\le 2C_2E_0$ implies
$\zeta(t)< 2C_2E_0$, and so we may conclude 
by continuous induction that
 \begin{equation}
 \label{bd}
  \zeta(t) < 2C_2E_0
 \end{equation}
for all $t\geq 0$, 
from which we readily obtain the stated bounds.
(By standard short-time $H^s$ existence theory, 
$V\in H^3$ exists and $\zeta$ remains
continuous so long as $\zeta$ remains bounded by some uniform constant,
hence \eqref{bd} is an open condition.)
\medskip

{\it Proof of Claim.}
By Lemma \ref{quadlem},
$$
|w_0|_{L^1\cap H^3}\le |V_0|_{L^1\cap H^3}+ |z_0|_{L^1\cap H^3}
\le
|V_0|_{L^1\cap H^3}+ C|w_0|_{H^3}^2, 
$$
whence
$$
|w_0|_{L^1\cap H^3}\le CE_0.
$$
Likewise, by Lemma \ref{quadlem},
\eqref{zeta2}, \eqref{Nbds}, and Lemma \ref{projlem}, for $0\le s\le t$
and $2\le p\le \infty$,
\ba\label{Nlast}
|\tilde \Pi_{cs}(N(V)+ \dot \alpha V)(y,s)|_{L^2}&\le C\zeta(t)^2 
(1+s)^{-\frac{3}{4}},\\
| \Pi_{cs}(N(V)+ \dot \alpha V)_y(y,s)|_{L^p}&\le C\zeta(t)^2 
(1+s)^{-\frac{1}{2}},\\
\ea

Combining the latter bounds with representations \eqref{w}--\eqref{alphadot}
and applying Corollary \ref{lpbds}, we obtain
 \ba\label{claimw}
  |w(x,t)|_{L^p} &\le
  \Big|\int^\infty_{-\infty} \tilde G(x,t;y) w_0(y)\,dy\Big|_{L^p}
   \\
 &\qquad +
\Big|\int^t_0
  \int^\infty_{-\infty} \tilde G_y(x,t-s;y)  \tilde \Pi_{cs}(N(V)+
  \dot \alpha V)(y,s)  dy \, ds\Big|_{L^p} \\
 &\qquad +
\Big|\int^t_0
  \int^\infty_{-\infty} H(x,t-s;y)  \Pi_{cs}(N(V)+
  \dot \alpha V)_y(y,s)  dy \, ds\Big|_{L^p} \\
  & \le
  E_0 (1+t)^{-\frac{1}{2}(1-\frac{1}{p})}
 + C\zeta(t)^2 \int^t_0 (t-s)^{-\frac{3}{4}+\frac{1}{2p}}
(1+s)^{-\frac{3}{4}} dy \, ds \\
 &\quad +
C\zeta(t)^2 \int^t_0 e^{-\theta (t-s)} (1+s)^{-\frac{1}{2}} dy \, ds \\
&\le
C(E_0+\zeta(t)^2)(1+t)^{-\frac{1}{2}(1-\frac{1}{p})}
\ea
and, similarly, using H\"older's inequality and
applying Corollary \ref{ebds},
\ba\label{claimalpha}
 |\dot \alpha(t)| &\le \int^\infty_{-\infty}|e_t(y,t)|
  |V_0(y)|\,dy \\
  &\qquad +\int^t_0\int^{+\infty}_{-\infty} |e_{yt}(y,t-s)||\tilde \Pi_{cs}
(N(V)+ \dot \alpha V)(y,s)|\,dy\,ds\\
&\le |e_t|_{L^\infty} |V_0|_{L^1}
+ C\zeta(t)^2 \int^t_0
|e_{yt}|_{L^2}(t-s) |\tilde \Pi_{cs}(N(V)+ \dot \alpha V)|_{L^2}(s) ds\\
&\le E_0 (1+t)^{-\frac{1}{2}}
+ C\zeta(t)^2 \int^t_0
(t-s)^{-\frac{3}{4}}(1+s)^{-\frac{3}{4}} ds\\
&\le C(E_0+\zeta(t)^2)(1+t)^{-\frac{1}{2}}.\\
\ea
Applying Lemma \ref{damping} and using \eqref{claimw} and
\eqref{claimalpha},
we obtain, finally,
\be\label{claimwH2}
|w|_{H^3}(t)\le C(E_0+\zeta(t)^2)(1+t)^{-\frac{1}{4}}.
\ee
Combining \eqref{claimw}, \eqref{claimalpha}, and \eqref{claimwH2},
we obtain \eqref{claim} as claimed.

\medbreak
As discussed earlier,
from \eqref{claim}, we obtain by continuous induction \eqref{bd}, or
$
 \zeta\le 2C_2|V_0|_{L^1\cap H^2}, 
$
whereupon the claimed bounds on $|V|_{L^p}$ and $|V|_{H^3}$ follow by
\eqref{claimw} and \eqref{claimwH2}, and on $|\dot \alpha|$ 
by \eqref{claimalpha}.
Finally, a computation parallel to \eqref{claimalpha} 
(see, e.g., \cite{MaZ3,Z2}) 
yields $| \alpha(t)| \le C(E_0+\zeta(t)^2)$, from which
we obtain the last remaining bound on $|\alpha(t)|$.
\end{proof}

\medbreak
{\bf Acknowledgement.}
Thanks to Milena Stanislavova and Charles Li for two interesting discussions
that inspired this work.



%
\appendix

\section{Example systems}\label{examples}

1. The general Navier--Stokes equations of compressible gas dynamics, written in
Lagrangian coordinates, appear as
\begin{equation}
\left\{\begin{array}{l}
v_t -u_x = 0,\\
u_t + p_x =((\nu/v) u_{x})_x,\\
(e+u^2/2)_t + (pu)_x = ((\kappa/v) T_x + (\mu/v) uu_x)_x,
\end{array}\right.
\label{NS}
\end{equation}
where $v>0$ denotes specific volume, $u$ velocity, $e>0$ internal energy, $T=T(v,e)>0$
temperature, 
$p=p(v,e)$ pressure, and $\mu>0$ and $\kappa>0$
are coefficients of viscosity and heat conduction, respectively.
Defining $U_1:=(v)$, $U_2:=(u,e+|u|^2/2)$, we find that
conditions (A1)--(A3) and (H1)--(H3)
are thus satisfied under the mild assumptions of ideal temperature dependence
\be\label{ideal}
T=T(e)
\ee
monotone temperature-dependence
\be\label{Tstab}
T_e>0,
\ee
and thermodynamic stability of the endstates, 
\be\label{thermstab}
(p_v)_\pm <0,  (T_e)_\pm >0;
\ee
see, e.g., \cite{MaZ4,Z2,Z3,TZ3} for further discussion.
Notably, this allows the interesting case of a van der Waals-type equation of state,
with $p_v>0$ for some values of $v$ along
the connecting profile.

2. The equations of MHD in Lagrangian coordinates are
\begin{equation}
\left\{\begin{array}{l}
v_t -u_{1x} = 0,\\
u_{1t} + (p+ (1/2\mu_0)(B_2^2+B_3^2))_x =((\nu/v) u_{1x})_x,\\
u_{2t}  - ((1/\mu_0)B_1^*B_2)_x =((\nu/v) u_{2x})_x,\\
u_{3t}  - ((1/\mu_0)B_1^*B_3)_x =((\nu/v) u_{3x})_x,\\
(vB_2)_{t}  - (B_1^*u_2)_x =((1/\sigma\mu_0 v) B_{2x})_x,\\
(vB_3)_{t}  - (B_1^*u_3)_x =((1/\sigma\mu_0 v) B_{3x})_x,\\
(e+(1/2)(u_1^2+ u_2^2+u_3^2) + (1/2\mu_0)v(B_2^2+B_3^2))_t \\
\qquad\qquad + [(p+ (1/2\mu_0)(B_2^2+B_3^2))u_1 - (1/\mu_0)B_1^*
(B_2u_2+B_3u_3)]_x \\
\qquad\quad
= [(\nu/v)u_1u_{1x} + (\mu/v) (u_2u_{2x} + u_3 u_{3x})\\
\qquad\qquad +(\kappa/v) T_x +  (1/\sigma \mu_0^2 v)(B_2B_{2x} + B_3 B_{3x})]_x,
\end{array}\right.
\label{MHD}
\end{equation}
where $v$ denotes specific volume, $u=(u_1,u_2,u_3)$ velocity,
$p=P(v,e)$ pressure, $B=(B_1^*,B_2,B_3)$ magnetic induction,
$B_1^*$ constant, $e$ internal energy, $T=T(v,e)>0$ temperature, and
$\mu>0$ and $\nu>0$ the two coefficients of viscosity, $\kappa>0$
the coefficient of heat conduction, $\mu_0>0$ the magnetic
permeability, and $\sigma>0$ the electrical resistivity.
Under (\ref{ideal})--(\ref{thermstab}),
conditions (A1)--(A3) are again satisfied, and
conditions (H1)--(H3) are satisfied (see \cite{MaZ4}) under the generically satisfied
assumptions that the endstates $U^\eps_\pm$
be strictly hyperbolic (i.e., have simple eigenvalues), 
and the speed $s$ be nonzero, i.e., the shock move with nonzero
speed relative to the background fluid velocity, with $U_1:=(v)$, $U_2:=
(u, B, e+|u|^2/2 + v|B|^2/2\mu_0)$.
(For gas dynamics, only Lax-type shocks and nonzero speeds can occur,
and all points $U$ are strictly hyperbolic.)

3. (MHD with infinite resistivity/permeability)
An interesting variation of (\ref{MHD}) that is of interest in certain astrophysical
parameter regimes is the limit in which either electrical resistivity $\sigma$, magnetic
permeability $\mu_0$, or both, go to infinity,
in which case the righthand sides of the fifth and sixth equations
of (\ref{MHD}) go to zero and there is a three-dimensional
set of hyperbolic modes $(v,vB_2,vB_3)$ instead of the usual one.
By inspection, the associated equations are still linear in the
conservative variables.
Likewise, (A1)--(A3), (H1)--(H3) hold under (\ref{ideal})--(\ref{thermstab})
for nonzero speed shocks with strictly hyperbolic
endstates. 

4. (multi-species gas dynamics or MHD)
Another simple example for which the hyperbolic modes are vectorial
is the case of miscible, multi-species flow, neglecting species diffusion,
in either gas dynamics or magnetohydrodynamics.
In this case, the hyperbolic modes consist of $k$ copies of the hyperbolic
modes for a single species, where $k$ is the number of total species.
Again, (A1)--(A3), (H1)--(H3) hold for nonzero
speed shocks with strictly hyperbolic endstates
under assumptions (\ref{ideal})--(\ref{thermstab}).


\end{document}